\let\reftagform@=\tagform@
\def\tagform@#1{\maketag@@@{(\ignorespaces\textcolor{purple}{#1}\unskip\@@italiccorr)}}
\renewcommand{\eqref}[1]{\textup{\reftagform@{\ref{#1}}}}
\DeclareUrlCommand\ULurl@@{%
  \def\UrlLeft{\uline\bgroup}%
  \def\UrlRight{\egroup}}
\def\ULurl@#1{\hyper@linkurl{\ULurl@@{#1}}{#1}}
\DeclareRobustCommand*\ULurl{\hyper@normalise\ULurl@}
\def\lessim{\ \lower4pt\hbox{$
		\buildrel{\displaystyle <}\over\sim$}\ }
\def\gessim{\ \lower4pt\hbox{$\buildrel{\displaystyle >}
		\over\sim$}\ }
\def\si{\sigma}
\def\eps{{\varepsilon}}
\newcommand{\e}{\mathbb{E}}
\newcommand{\p}{\mathbb{P}}
\newcommand{\indi}{\ensuremath{\boldsymbol 1}}
\newcommand{\kx}{\mathcal{K}}
\newcommand{\qx}{\mathcal{Q}}
\newtheorem{lemma}{\bf Lemma}
\newtheorem{theorem}[lemma]{\bf Theorem}
\newtheorem{corollary}[lemma]{\bf Corollary}
\newtheorem{example}{\bf Example}
\newtheorem{proposition}[lemma]{\bf Proposition}
\newtheorem{question}{\bf Question}
\theoremstyle{remark}
\newtheorem{remark}{Remark}
\newcommand{\8}{\infty}
\newcommand{\rz}{\mathbb{R}}
\newcommand{\px}{\mathcal{P}}
\newcommand{\de}{\delta}
\newcommand{\ux}{\mathcal{U}}
\newcommand{\ga}{\gamma}
\newcommand{\la}{\lambda}
\newenvironment{Proof of lemma}{\noindent{\bf Proof of Lemma}}{\hfill$\Box$\newline}
\newenvironment{Proof of theorem}{\noindent{\bf Proof of Theorem}}{\hfill{\footnotesize${\square}$}\newline}
\newenvironment{Proof of theorems}{\noindent{\bf Proof of Theorems}}{\hfill$\Box$\newline}
\newenvironment{Proof of proposition}{\noindent{\bf Proof of Proposition}}{\hfill$\Box$\newline}
\newenvironment{Proof of propositions}{\noindent{\bf Proof of Propositions}}{\hfill$\Box$\newline}
\newenvironment{Proof of exercise}{\noindent{\it Proof of Exercise:}}{\hfill$\Box$}
\begin{document}
\title{Existence of two-step replica symmetry breaking for the spherical mixed $p$-spin glass at zero temperature}
\author{Antonio Auffinger \thanks{Department of Mathematics, Northwestern University, tuca@northwestern.edu, research partially supported by NSF Grant CAREER DMS-1653552 and NSF Grant DMS-1517894.} \\
	\small{Northwestern University}\and Qiang Zeng \thanks{Department of Mathematics, Northwestern University, Email: qzeng.math@gmail.com.}\\
	\small{Northwestern University}}
\date{June 25, 2018}

\maketitle

\begin{abstract}
We provide the first examples of two-step replica symmetry breaking (2-RSB) models for the spherical mixed $p$-spin glass at zero temperature. Precisely, we show that for a certain class of mixtures, the Parisi measure at zero temperature is purely atomic and has exactly three distinct points in its support. We then derive a few consequences for the topology of the random landscape in these cases. Our main result also provides a negative answer to a question raised in 2011 by Auffinger and Ben Arous about the classification of pure-like and full mixture models.

\end{abstract}

\section{Introduction}\label{sect1}
For $N\geq 1$, let $S_N = \{\si\in\rz^N: \sum_{i=1}^N \si_i^2 =N\}$ be the sphere of radius $\sqrt{N}$. The Hamiltonian of the spherical mixed $p$-spin model is defined as
the centered Gaussian field indexed by $S_{N}$ with covariance
\[
\e[H_N(\si^1) H_N(\si^2)] = N\xi(R_{12})
\]
where
\[
\xi(x)=\sum_{p=2}^\8 c_p^2 x^p, \quad  c_p\ge0
\] and
$R_{12}=R_{12}(\si^{1},\si^{2})=\frac{1}{N}\sum_{i=1}^{N}\si^{1}_{i}\si^{2}_{i}$ is the normalized inner product. We assume that the variance is constant by setting
\[
\xi(1)=\sum_{p}c_p^2 =1.
\]
We also assume that $\sum_{p\geq 2} 2^{p}c_{p}<\infty$. When $\xi(x) = x^{2}$, we recover the spherical Sherrington-Kirkpatrick model \cite{KTJ76} while the choice of $\xi(x)=x^{p}$, $p\geq 3$ represents the spherical pure $p$-spin model. We say $\xi$ is a convex model if it is a convex function.

Let $\kx$ denote the collection of all measures on $[0,1]$ which are of the form
\[
\nu(ds) = \indi_{[0,1)}(s)\ga(s) ds + \Delta\de_{\{1\}}(ds),
\]
where $\ga(s)$ is a nonnegative and nondecreasing function on $[0,1)$ with right-continuity and $\Delta>0$.
Define the Crisanti-Sommers functional \cite{CS} for $\nu\in \kx$
\begin{align*}
\qx(\nu) = \frac12\Big( \int_0^1\xi'(s)\nu(ds) +\int_0^1 \frac{dq}{\nu((q,1])}\Big).
\end{align*}
The minimizer of $\qx(\nu)$ exists and is unique \cite[Theorem 1]{ArnabChen15}. We denote it by \[\nu_P(ds) = \indi_{[0,1)}(s)\ga_P(s) ds + \Delta_P\de_{\{1\}}(ds).\] The measure $\nu_P=\nu_{P}(\xi)$ on $[0,1]$ is called the Parisi measure at zero temperature. Its importance lies primarily on the fact that it describes the energy landscape of $H_{N}$ near the global minimum \cite{AC17}; for instance
\[
 \lim_{N\to \infty} \frac{1}{N} \min_{\sigma \in S_{N}} H_{N}(\sigma)= -\qx(\nu_{P}),
\]
see \cite[Theorem 1]{ArnabChen15} and \cite[Theorem 1.1.3]{JT16}.

The aim of this article is to study the structure of the support of the Parisi measure at zero temperature as a function of the model $\xi$. Our main result shows that it is possible to find functions $\xi$ such that the Parisi measure of the model is atomic with exactly three atoms in its support.\footnote{At zero temperature the Parisi measure always have an atom at $1$. In \cite{AC17}, the authors used the word Parisi measure for the measure induced by the function $\ga(s)$.}  This is referred as two levels of replica symmetry breaking (2-RSB) in mathematical physics nomenclature ($k$-RSB refers to atomic measures with $k+1$ atoms). For more information on the physics literature, we refer the reader to \cite{MPV}. Precisely, we prove the following.

\begin{theorem}\label{mainthm}
There exist models $\xi$ such that for some positive constants $m_{1}, m_{2}, \Delta_{P}$ and $q \in (0,1)$ one has
\[
\nu_{P}(\xi)=m_{1} \delta_{0} + m_{2} \delta_{q} + \Delta_{P} \delta_{1}.
\]
\end{theorem}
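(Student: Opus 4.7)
The plan is to construct an explicit mixture $\xi$ together with a three-atom candidate $\nu^* = m_1 \delta_0 + m_2 \delta_q + \Delta_P \delta_1$ that is a critical point of the Crisanti--Sommers functional $\qx$ over $\kx$; convexity of $\qx$ then makes any such critical point a global minimiser, and the uniqueness statement recalled from \cite{ArnabChen15} forces $\nu^* = \nu_P(\xi)$. A direct variational calculation (integrate by parts and apply Fubini, using $\xi'(0) = 0$) shows that the first variation of $\qx$ at $\nu^*$ along an admissible signed measure $\delta\nu$ equals $\tfrac12 \int_0^1 G(s)\, d\delta\nu(s)$, where
\[
G(s) = \xi'(s) - \int_0^s \frac{dt}{u(t)^2},\qquad u(s) = \nu^*((s,1]).
\]
Setting $H(s) = \int_0^s G(t)\,dt$ and enumerating the admissible perturbation directions in $\kx$ (bilateral shifts of $m_1, m_2, q, \Delta_P$, together with one-sided insertions of extra mass into $\gamma$) yields the four scalar equalities $G(q) = G(1) = 0$ and $H(q) = H(1) = 0$, along with the one-sided inequality $H(r) \le 0$ on $[0,1]$, with equality exactly at $\{0,q,1\}$.

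On the 2-RSB ansatz, $u$ is piecewise affine with slopes $-m_1$ on $[0,q)$ and $-(m_1+m_2)$ on $[q,1)$. Writing $a = u(0),\ b = u(q),\ c = \Delta_P$, so that $m_1 = (a-b)/q$ and $m_1+m_2 = (b-c)/(1-q)$, an elementary integration gives $\int_0^1 dt/u(t)^2 = q/(ab) + (1-q)/(bc)$, so that $G(q) = G(1) = 0$ read
\[
\xi'(q) = \frac{q}{ab},\qquad \xi'(1) = \frac{q}{ab} + \frac{1-q}{bc},
\]
while $H(q) = H(1) = 0$ produce two further relations involving $\xi(q), \xi(1) = 1$ and the logarithms $\ln(a/b)$ and $\ln(b/c)$ coming from integrating $1/u$ against the affine pieces. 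Parametrising $\xi(x) = \sum_i c_{p_i}^2 x^{p_i}$ as a convex combination of a few pure $p$-spins supplies enough free parameters that this four-equation system admits solutions with $m_1, m_2, \Delta_P > 0$ and $q \in (0,1)$; I would produce one such solution either by a perturbation argument starting from a degenerate 1-RSB configuration (e.g.\ letting $m_2 \to 0$, $q \to 0$, or $q \to 1$ to collapse to a known 1-RSB model and then opening into the interior via the implicit function theorem) or by an explicit low-dimensional construction with a carefully chosen set of exponents $p_i$.

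The main obstacle is actually realising an interior critical point and then checking the one-sided inequality $H \le 0$. The bilateral equations are transcendental --- polynomial in $q$ and in the mixture coefficients, but logarithmic in $a,b,c$ --- so dimension counting alone cannot produce a 2-RSB solution; moreover, the inequality $H \le 0$ is precisely what excludes further atoms and genuinely pins down the 2-RSB structure, and verifying it typically requires a careful sign analysis of $G$ on $[0,1]$ using convexity properties of $\xi'$ together with the monotonicity of $1/u^2$. Once both the bilateral equalities and the sign constraint have been verified for a specific mixture, convexity of $\qx$ together with the cited uniqueness theorem identifies $\nu^*$ with $\nu_P(\xi)$ and closes out the proof.
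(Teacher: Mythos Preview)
Your variational framework is correct and coincides with the paper's: your conditions $G(q)=G(1)=0$, $H(q)=H(1)=0$, $H\le 0$ are exactly the Chen--Sen criterion (Theorem~\ref{thmCS}) in different notation (their $g(u)=-H(u)$ once $H(1)=0$, and the appeal to convexity of $\qx$ is unnecessary since that criterion is already necessary and sufficient). But the proposal stops where the real content begins. You identify the difficulty honestly --- the four equalities are transcendental and the sign constraint $H\le 0$ is delicate --- and then defer it: ``I would produce one such solution either by a perturbation argument \ldots\ or by an explicit low-dimensional construction.'' Neither alternative is carried out, so nothing is proved.

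The paper's proof is a concrete construction, not a perturbation. It restricts to $\xi(x)=(1-\lambda)x^s+\lambda x^p$ with $s\ge 3$, rewrites the equalities as a system $f_1(q,z_2)=f_2(q,z_2)=0$ in new variables (equations~\eqref{e1}--\eqref{e2}), and handles the inequality $g\ge 0$ via Descartes' rule of signs on two auxiliary polynomials $h_1,h_2$ (Lemmas~\ref{l:h1}--\ref{l:h2}), reducing it to a handful of checkable sign conditions (Theorem~\ref{thm1}). A separate monotonicity lemma (Proposition~\ref{pconla}) for the implicit curve $f_2=0$, together with an analysis of the zero structure of $f_1(q,\cdot)$ (Proposition~\ref{p0f1f2}), lets them trap a solution in a small rectangle by evaluating $f_1,f_2$ at finitely many rational test points; for $\xi(x)=\tfrac57 x^3+\tfrac27 x^{16}$ this is done numerically and the remaining inequalities \eqref{z1est}--\eqref{h20est} are verified on that rectangle.

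Your perturbation-from-1-RSB suggestion is not obviously viable: at the degenerate boundary ($m_2\to 0$ or $q\to 0,1$) you would need a nondegenerate Jacobian and a guarantee that the bifurcating branch stays inside the region where $H\le 0$. The paper's analysis shows the sign constraint translates into genuine conditions ($h_1(1)>0$, $h_2(0)<0$, and \eqref{g2u0}) that are not automatic near the boundary, so an implicit-function-theorem argument would require its own substantial work that you have not supplied.
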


The classification of levels of replica symmetry breaking (RSB) has a long history in the physics community, see \cite[Chapter 3]{MPV}. Although one can artificially cook up stochastic processes with arbitrary levels of finite RSB (for instance the GREM \cite{BovierK,Derrida}), it was believed that only replica symmetric (RS), $1$-RSB and FRSB models would appear ``naturally''.\footnote{See, for instance, Bolthausen's survey article \cite[Page 15]{Bolt00}.} For instance, all models defined on the discrete hypercube would exhibit infinite levels of replica symmetry breaking at low enough temperature (FRSB), that is, the Parisi measure will have infinite many points in its support\footnote{One expects in this case that the support of the Parisi measure contains an interval.}. Moreover, it is known that the spherical pure $p$-spin model is $1$-RSB at low temperature (see \cite[Section 4]{PT}).

More recently, existence of $2$-RSB spherical models were suggested at positive temperature in the physics literature in an insightful paper by Crisanti-Leuzzi \cite{CL07} (see also \cite{CriLeuzz,Krakoviack,CL07a} for a long debate on these cases). Several rigorous results on properties of Parisi measures appear in the works of Talagrand \cite{Tal06}, Panchenko-Talagrand \cite{PT}, and more recently in \cite{ABC, AB, AC15a, AC17, ACZ17, JT16, Pan}. We refer the readers to Talagrand \cite{Tal06} and Auffinger-Chen \cite{AC14} for an introduction on Parisi measures.

As far as we know, Theorem \ref{mainthm} is the first rigorous result to provide ``natural'' examples of spin glass models beyond RS, $1$-RSB and FRSB phases. This result is somewhat surprising. Indeed, prior to the results of this paper, it was perceived that the spherical model had only one of two possible behaviors at zero temperature: the Parisi measure would either be  $1$-RSB (as in the pure $p$-spin) or FRSB. This distinction was the motivation behind the classification of pure-like and full mixture models introduced in \cite{AB}. It turns out that this is a tepid classification: we find pure-like $2$-RSB models and full mixture $2$-RSB models.
Furthermore, the examples that we construct provide a  negative answer to Question \cite[Question 4.1]{AB} that does not involve using the spherical SK model. We refer the reader to Remark \ref{rem111} in Section \ref{sec:examples} for more on these examples.

A word of comment is needed. The structure of the Parisi measure at zero temperature in the case of Ising spins (when $S_{N}$ is replaced by $\{\pm 1 \}^{N}$) was recently determined in \cite{ACZ17}. The result there is strikingly different: on $\{\pm 1\}^{N}$, all models have infinite levels of replica symmetry breaking, as predicted by physicists. Comparing with Theorem \ref{mainthm}, this shows that the sphere has a richer collection of models than the hypercube (at least in terms of different levels of RSB).

We also study the landscape of $H_{N}$ when the model is $2$-RSB. We provide information on the topology of level sets near the global minimum of $H_{N}$. In order to state our results, set, for any $\eta>0$,
\begin{equation*}
		\mathcal L(\eta):=\big \{ \sigma \in S_{N}: H_{N}(\sigma) \leq - N (\qx(\nu_{P}) - \eta)\big\}.
		\end{equation*}
For any Borel measurable set $A \subset [-1,1]$ set
\[
\mathbb P_{N}(\eta,A) := \mathbb P(\exists \; \sigma^{1}, \sigma^{2 } \in  \mathcal L(\eta), \text{ with } R_{1,2} \in A).
\]
 For any $\varepsilon>0$, also set
\[
 A_{\varepsilon} = \{ x \in [-1,1] : \exists \; y \in A \text{ with } |x-y| <\varepsilon \}.
\]
We  write $-A =\{ x \in [-1,1]: -x \in A\}.$
\begin{theorem}\label{thm:2RSB}

Let $\xi$ be a 2-RSB convex model and $S=\{0,q,1\}$ denote the support of $\nu_{P}(\xi)$. Suppose $\psi_1(a)<0, \forall a\in(0,q)$ and $\psi_2(b)<0, \forall b\in (q,1)$, where $\psi_1$ and $\psi_2$ are given in \eqref{eh1a} and \eqref{eh2b}.  Then for any $\varepsilon>0$, there exist $\eta,K>0$ such that for all $N\geq 1,$
		\begin{align*}
		\mathbb{P}_N\bigl(\eta,(S_{\varepsilon}\cup -S_{\varepsilon})^{c}\bigr)&\leq Ke^{-\frac{N}{K}}.
		\end{align*}
\end{theorem}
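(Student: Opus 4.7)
The plan is to bound $\mathbb{P}_N(\eta,A)$ for $A=(S_\varepsilon\cup -S_\varepsilon)^c$ via a first moment argument on pairs of configurations, combined with a two-replica Guerra-type upper bound whose exponential rate on the excluded overlap ranges $(0,q)$ and $(q,1)$ is governed by $\psi_1$ and $\psi_2$ respectively. Since these functions are strictly negative and continuous on their respective open intervals, compactness of the excluded overlap range upgrades the pointwise hypotheses into a uniform exponential gap, which through a finite cover and union bound yields the stated inequality.

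First I would reduce to positive overlaps. By the sign-flip symmetry $\sigma^2\mapsto -\sigma^2$ (exact when $\xi$ is even, and otherwise handled by explicit sign bookkeeping in the two-replica functional), overlaps in $-A$ are controlled analogously to overlaps in $A$. Since $A$ is closed and bounded away from $\{0,\pm q,\pm 1\}$ by at least $\varepsilon$, I would cover $A\cap[0,1]$ by finitely many open intervals $(a_j-\delta,a_j+\delta)$, $j=1,\dots,M$, each contained either in $(\varepsilon,q-\varepsilon/2)$ or in $(q+\varepsilon/2,1-\varepsilon/2)$.

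For each such $j$, a Markov-type estimate on the expected volume of admissible pairs (together with a Lipschitz estimate on $H_N$ to handle the closed versus open overlap condition) reduces the probability in question, up to sub-exponential factors, to controlling
\[
\mathbb{E}\int_{S_N^2}\indi\{|R_{1,2}-a_j|<\delta,\,H_N(\sigma^1)\le -NE,\,H_N(\sigma^2)\le -NE\}\,d\sigma^1\,d\sigma^2
\]
with $E=\qx(\nu_P)-\eta$. Fubini splits this into a pair-overlap surface factor on $S_N^2$ and the bivariate Gaussian tail of $(H_N(\sigma^1),H_N(\sigma^2))$, whose covariance depends only on $\xi$ and $R_{1,2}$. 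The exponential rate of this product is the minimum of a two-replica zero-temperature Crisanti-Sommers type functional with inter-replica overlap constrained to $a_j$. A Guerra-type interpolation, extending the single-replica argument of \cite{ArnabChen15,JT16} to the constrained two-replica setting in the spirit of the Ising analysis of \cite{ACZ17}, identifies this minimum with $2\qx(\nu_P)+\psi_i(a_j)$, where $i=1$ if $a_j\in(\varepsilon,q-\varepsilon/2)$ and $i=2$ if $a_j\in(q+\varepsilon/2,1-\varepsilon/2)$.

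The main obstacle is the construction of the two-replica zero-temperature constrained upper bound with matching rate $\psi_i$, and the upgrade of the pointwise strict negativity of $\psi_i$ to a uniform bound on compact subintervals via continuity. Once this is in place, choosing $\eta$ small enough that $2\eta+\max_{i,j}\psi_i(a_j)<0$ and $K=K(\varepsilon,\eta)$ sufficiently large, a union bound over $j=1,\dots,M$ completes the proof.
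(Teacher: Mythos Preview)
Your proposal has a genuine conceptual gap in how $\psi_1,\psi_2$ enter the argument, and the first-moment route you sketch does not match what these functions actually are.

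In the paper, $\psi_1(a)$ and $\psi_2(b)$ are \emph{not} the exponential-rate gaps ``constrained two-replica rate minus $2\qx(\nu_P)$''. They are the \emph{derivatives} $\partial_m \mathcal P_a(B,\nu_m)\big|_{m=1}$ (up to a positive factor) of the coupled Parisi functional along a one-parameter deformation $\nu_m$ of the 2-RSB measure, where on $[0,a)$ (resp.\ $[q,b)$) the density is multiplied by $m$. At $m=1$ one has $\mathcal P_a(B,\nu_1)=2\qx(\nu_P)$, and the hypothesis $\psi_i<0$ is used, via the mean value theorem, to find some $m$ near $1$ with $\mathcal P_{|u|}(B,\nu_m)\le 2\qx(\nu_P)-4\eta$. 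The Guerra--Talagrand coupled bound (which is a variational \emph{lower} bound on the expected coupled minimum $M_N(A)$, not an annealed first-moment upper bound on pairs) then gives $\liminf_N M_N((u-\varepsilon',u+\varepsilon'))\ge -\mathcal P_{|u|}(B,\nu_m)> -2\qx(\nu_P)+3\eta$. Gaussian concentration for the minimum of $H_N(\sigma^1)+H_N(\sigma^2)$ over the constrained set, followed by a finite cover, finishes the proof.

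Your first-moment computation with the bivariate Gaussian tail and spherical surface factor produces an annealed rate that has no reason to equal $2\qx(\nu_P)+\psi_i(a)$, and the sentence ``a Guerra-type interpolation \dots identifies this minimum with $2\qx(\nu_P)+\psi_i(a_j)$'' conflates two different objects: Guerra interpolation bounds the \emph{quenched} coupled minimum by a variational functional over measures $\nu_m$, it does not evaluate an annealed moment. Also, the handling of negative overlaps is not via a sign flip; the coupled Guerra--Talagrand bound for convex $\xi$ (this is where the convexity hypothesis is used, cf.\ \cite[Theorem 11]{AC17}) directly covers $u\in[-q,q]$ with $a=|u|$, and analogously for $|u|\in[q,1]$. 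To fix your argument, replace the first-moment step by the coupled Guerra--Talagrand lower bound on $M_N$, compute $\partial_m\mathcal P_a(B,\nu_m)|_{m=1}$ explicitly to recover $\psi_1,\psi_2$, and then run the compactness/mean-value/concentration steps you already outlined.
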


The conclusion of Theorem \ref{thm:2RSB} was only known in the case of $1$-RSB models \cite[Theorem 6]{AC17}. The analysis of the $2$-RSB case is more involved and it is done in Section \ref{proofofthm2}.  The conditions on $\psi_1$ and $\psi_2$ in Theorem \ref{thm:2RSB} are counterparts of the more restrictive condition $\zeta(s)<0, \forall s\in(0,1)$ for 1-RSB \cite[Theorem 6]{AC17}.  These conditions on $\psi_1$ and $\psi_2$  always hold for the models we consider in this paper; see Proposition \ref{prop2} and Remark \ref{rem3}.

Let us now describe a major consequence of Theorem  \ref{thm:2RSB}.  For any $\varepsilon,\eta,K>0$,  denote by
    \begin{align*}
    \p_N(\varepsilon,\eta,K)
    \end{align*}
    the probability that there exists a subset $O_N\subset S_N$ such that
    \begin{enumerate}
    	\item[$(i)$] $O_N\subset \mathcal{L}(\eta).$
    	\item[$(ii)$] $O_N$ contains at least $Ke^{N/K}$ many elements.
    	\item[$(iii)$] $|R(\sigma,\sigma')|\leq\varepsilon$ for all distinct $\sigma,\sigma'\in O_N.$
    \end{enumerate}
It is known \cite[Proposition 2]{AC17} that  for any $\varepsilon,\eta>0$, there exists $K>0$ such that for any $N\geq 1,$
		\begin{align}\label{es}
		\p_N(\varepsilon,\eta,K)\geq 1-Ke^{-N/K}.
		\end{align}
If we combine \eqref{es} with Theorem \ref{thm:2RSB} we obtain the following orthogonal decomposition of local minima of $H_{N}$. Let
\[
\text{Crt}(\eta) = \{ \sigma \in S_{N}: \nabla H_{N} =0, H_{N}(\sigma) \leq -N (\qx(\nu_{P}) - \eta) \}.
\]
\begin{corollary}\label{cor:1}
There exist $\eta'>0, \epsilon' >0, K>0$ so that for all $N\geq 1$ the event defined by
\begin{enumerate}
\item[$(i)$]  $\mathbb \# \text{Crt} (\eta') \geq Ke^{N/K},$
\item[$(ii)$]  Any continuous path connecting two points in $\text{Crt}(\eta')$ must leave the level set  $\mathcal L(\eta'+\epsilon')$,
\end{enumerate}
has probability  at least $1-Ke^{-\frac{N}{K}}$.
\end{corollary}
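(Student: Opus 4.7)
The plan is to combine \eqref{es} with Theorem \ref{thm:2RSB} via a continuity-of-overlap argument. First I fix $\varepsilon_0\in(0,\min(q,1-q)/3)$, so that the five $\varepsilon_0$-intervals composing $S_{\varepsilon_0}\cup-S_{\varepsilon_0}$ (the neighborhoods of $-1,-q,0,q,1$) are pairwise disjoint. Applying Theorem \ref{thm:2RSB} with this $\varepsilon_0$ produces constants $\eta_0,K_0>0$ such that, outside an event of probability at most $K_0 e^{-N/K_0}$, every pair of points in $\mathcal L(\eta_0)$ has overlap in $S_{\varepsilon_0}\cup-S_{\varepsilon_0}$. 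Set $\eta':=\eta_0/3$ and $\epsilon':=\eta_0/3$, so that $\eta'+\epsilon'<\eta_0$. Apply \eqref{es} with parameters $\varepsilon_0$ and $\eta'/2$ to obtain, with probability at least $1-K_1e^{-N/K_1}$, a set $O_N\subset\mathcal L(\eta'/2)$ with $|O_N|\geq K_1e^{N/K_1}$ and pairwise overlaps bounded by $\varepsilon_0$ in absolute value; since $\varepsilon_0<q$, those overlaps actually lie in the $\varepsilon_0$-neighborhood of $0$.

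On the intersection of the two good events I claim that any two distinct $\sigma,\sigma'\in O_N$ lie in different connected components of $\mathcal L(\eta'+\epsilon')$. Indeed, suppose a continuous path $\gamma:[0,1]\to\mathcal L(\eta'+\epsilon')$ joined them; since $\eta'+\epsilon'<\eta_0$, every $\gamma(t)$ belongs to $\mathcal L(\eta_0)$, so by Theorem \ref{thm:2RSB} the continuous function $t\mapsto R(\sigma,\gamma(t))$ takes values in the disjoint union $S_{\varepsilon_0}\cup-S_{\varepsilon_0}$ of five small intervals. By connectedness of $[0,1]$ its image must lie in a single one of these intervals, contradicting $R(\sigma,\gamma(0))=1$ and $R(\sigma,\gamma(1))\in(-\varepsilon_0,\varepsilon_0)$.

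To convert $O_N$ into the critical points required by item $(i)$, for each $\sigma\in O_N$ I pick a minimizer $\sigma^*$ of $H_N$ on the (compact) connected component of $\mathcal L(\eta')$ containing $\sigma$. Because $H_N(\sigma^*)\leq H_N(\sigma)\leq -N(\qx(\nu_P)-\eta'/2)$ is strictly below the defining threshold $-N(\qx(\nu_P)-\eta')$, the point $\sigma^*$ lies in the $S_N$-interior of its component, hence is a local minimum of $H_N$ on $S_N$, and so $\sigma^*\in\text{Crt}(\eta')$. Since $\mathcal L(\eta')\subset\mathcal L(\eta'+\epsilon')$, distinct $\sigma$'s produce $\sigma^*$'s in distinct components of $\mathcal L(\eta'+\epsilon')$, giving both the count $(i)$ and the barrier statement $(ii)$. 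The only real difficulty is the elementary bookkeeping to guarantee $\eta'+\epsilon'<\eta_0$ and that the minimizers land strictly below the threshold; the substantive ingredient — the rigid overlap geometry near the ground state — is supplied directly by Theorem \ref{thm:2RSB}.
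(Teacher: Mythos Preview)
Your proof is correct and follows essentially the same approach as the paper's: combine Theorem \ref{thm:2RSB} with \eqref{es}, use continuity of the overlap along a hypothetical path inside the level set to force a contradiction, and extract a local minimum from each resulting connected component. Your bookkeeping is in fact more careful than the paper's---you introduce the extra layer $\mathcal L(\eta'/2)$ so that the minimizer on each component of $\mathcal L(\eta')$ is strictly below the defining threshold and hence genuinely lies in $\text{Crt}(\eta')$, a point the paper leaves implicit.
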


	A major feature of Theorem \ref{thm:2RSB} and Corollary  \ref{cor:1} is that we can always find exponentially many local minima of $H_{N}$ around the ground state energy. Furthermore, if we think $H_{N}$ as a random landscape on the sphere, in order to go from one deep local minimum to another one we must climb a diverging energy barrier (in $N$). This fact was predicted to hold in $1$-RSB models and known for the pure $p$-spin \cite{ABC, AC17}. Now that such phenomenon occurs in the 2-RSB phase, it is natural to ask the following question:
	\begin{question}
	In the family of spherical mixed $p$-spin models, do there exist $k$-RSB models (at zero or positive temperature) for any $k\ge 3, k\in \mathbb{N}$?
	\end{question}
We anticipate the answer to be positive. In fact, our results for 2-RSB suggest that the energy landscape of $k$-RSB models for $k\ge 3$ should satisfy Theorem \ref{thm:2RSB} and Corollary \ref{cor:1} provided such models exist. However, since we have discovered $2$-RSB examples, the next desirable step is not only to answer the question above but to give a complete characterization  of the Parisi measure as a function of $\xi$. This seems beyond reach at this moment.

	The organization of the paper is the following. In the next section, we establish sufficient and necessary conditions for the model to be $2$-RSB. These conditions follow from a careful analysis of a criterion developed by Chen and Sen in \cite{ArnabChen15}. This criterion was also explored in the $1$-RSB case by Auffinger and Chen in \cite{AC17}. Compared to the $1$-RSB case, the complexity of the analysis of the $2$-RSB is acutely more demanding. In Section \ref{s:s+p}, we provide sufficient conditions for $s+p$ models (i.e. $\xi(x)=(1-\la) x^s +\la x^p$) to be 2-RSB. These conditions are easier to check in practice, and are indeed verified in Section \ref{sec:examples} for some models, proving Theorem~\ref{mainthm}. We study the energy landscape of 2-RSB models in Section \ref{proofofthm2}.

\subsection*{Acknowledgements} We would like to thank Wei-Kuo Chen for helpful discussions in the initial stage of this project. We also thank the referees for many suggestions, which have helped to improve the presentation of the paper.

\section{Criteria for a $2$-RSB Parisi measure}
We start this section by recalling the following criterion derived in Chen-Sen \cite{ArnabChen15}. Similar criteria also appeared in \cite{AC17, Tal06}. Recall from Section \ref{sect1} that $\nu\in \kx$ is a measure of the form
\[
\nu(ds) = \indi_{[0,1)}(s)\ga(s) ds + \Delta\de_{\{1\}}(ds).
\]

\begin{theorem}[Chen-Sen \cite{ArnabChen15}] \label{thmCS}For $\nu \in \kx$,  let
\begin{align*}
g(u) &= \int_u^1 \bigg(\xi'(s)-\int_0^s \frac{dr}{\nu((r,1])^2} \bigg)ds.
\end{align*}
Then $\nu$ is the Parisi measure for the model $\xi$ if and only if
\[\xi'(1)=\int_0^1\frac{dr}{\nu((r,1])^2},\]
 the function $g$ satisfies
\[ \min_{u\in[0,1]}g(u)\geq 0,\]
and for $S:=\{ u \in [0,1): g(u)=0\}$ one has $\rho(S)=\rho([0,1))$. Here $\rho$ is the measure induced by $\ga$, i.e. $\rho([0,s])=\ga(s)$.
\end{theorem}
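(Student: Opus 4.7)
The plan is to use convex analysis. The functional $\qx$ is convex on the convex set $\kx$ (the first term is linear in $\nu$ and $\nu\mapsto 1/\nu((q,1])$ is convex as a composition of $1/x$ with a nonnegative linear functional), so $\nu$ is a minimizer if and only if the one-sided directional derivative $\frac{d}{dt}\big|_{t=0^+}\qx((1-t)\nu+t\nu')\geq 0$ for every $\nu'\in\kx$. All three conditions in the theorem should fall out of this first-order optimality condition applied to strategically chosen $\nu'$.

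The first step would be to compute the directional derivative explicitly. Differentiating term by term (interchange of derivative and integral is justified by $\nu_t((q,1])\geq \min(\Delta,\Delta')>0$ uniformly in $t$) produces
\[
\frac{d}{dt}\bigg|_{t=0^+}\qx(\nu_t)\ =\ \frac12\int_0^1 \xi'(s)(\nu'-\nu)(ds)-\frac12\int_0^1 \frac{(\nu'-\nu)((q,1])}{\nu((q,1])^2}\,dq.
\]
A Fubini swap rewrites the second integral as $\frac12\int_0^1\bigl(\int_0^s dq/\nu((q,1])^2\bigr)(\nu'-\nu)(ds)$. Setting $h(s):=\xi'(s)-\int_0^s dq/\nu((q,1])^2=-g'(s)$, the optimality condition compacts to $\int h\,d\nu'\geq\int h\,d\nu$ for every $\nu'\in\kx$.

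The next step is to unpack this inequality along the two ``coordinates'' of $\kx$. Writing $\nu=\gamma(s)\indi_{[0,1)}\,ds+\Delta\delta_1$ and $\nu'=\gamma'(s)\indi_{[0,1)}\,ds+\Delta'\delta_1$, I first freeze $\gamma'=\gamma$ and let $\Delta'$ range over $(0,\infty)$; this yields $h(1)(\Delta'-\Delta)\geq 0$ for all $\Delta'>0$, which forces $h(1)=0$ and is exactly condition (i) of the theorem. With $h(1)=0$ in hand, a direct Fubini swap gives
\[
\int_0^1 h(s)\gamma(s)\,ds\ =\ \int_0^1 h(s)\rho([0,s])\,ds\ =\ \int_{[0,1)}\Bigl(\int_u^1 h(s)\,ds\Bigr)\rho(du)\ =\ \int_{[0,1)} g(u)\,\rho(du),
\]
and similarly for $\gamma'$, $\rho'$. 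The optimality inequality then reads $\int g\,d\rho'\geq \int g\,d\rho$ for every nonnegative Radon measure $\rho'$ on $[0,1)$. Testing with $\rho'=\rho+c\delta_{s_0}$ ($c>0$, $s_0\in[0,1)$) forces $g(s_0)\geq 0$, giving (ii) (and $g(1)=0$ trivially), while testing with $\rho'=0$ combined with $g\geq 0$ forces $\int g\,d\rho=0$, equivalent to $\rho$ being supported on $S=\{g=0\}$, which is condition (iii).

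For the converse, under (i)--(iii) the directional derivative evaluates to $\frac12\int g\,d\rho'\geq 0$ for every $\nu'\in\kx$ (the $\rho$-integral vanishes because $g\equiv 0$ on $\mathrm{supp}\,\rho$), and convexity promotes this first-order inequality to global minimality; uniqueness of the minimizer quoted above then identifies $\nu=\nu_P$. The main technical obstacle I foresee is the bookkeeping around the endpoints of $[0,1)$---justifying the Fubini interchange when $\gamma$ may be unbounded near $1$, and correctly interpreting a possible atom of $\rho$ at $0$ arising from $\gamma(0)>0$; neither difficulty is deep, but both demand care to ensure that no spurious boundary terms appear and that the formal manipulation of $h$ as $-g'$ is legitimate on the full interval.
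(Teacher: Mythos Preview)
The paper does not prove this statement; it is quoted without proof as a result of Chen and Sen (the text reads ``We start this section by recalling the following criterion derived in Chen--Sen''), so there is no in-paper argument against which to compare your proposal.

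Your convex-analytic derivation is nonetheless correct and is the natural route. The functional $\qx$ is convex on the convex set $\kx$, so minimality is equivalent to nonnegativity of every one-sided directional derivative; unpacking that derivative first along the $\Delta$-coordinate and then (via Fubini and the identity $g(u)=\int_u^1 h(s)\,ds$) along the $\rho$-coordinate yields exactly the three stated conditions, and the converse follows at once from convexity. The endpoint worries you flag are harmless: since $\nu((r,1])\ge\Delta>0$ one has $|h(s)|\le\xi'(s)+\Delta^{-2}$, so $\int_0^1|h|\gamma<\infty$ whenever $\qx(\nu)<\infty$, which justifies both interchanges; and a possible atom $\rho(\{0\})=\gamma(0)>0$ simply contributes $\gamma(0)\,g(0)$ to $\int g\,d\rho$, absorbed by the same reasoning.
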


Here the notation $\nu$ is different from the one used in \cite[Theorem 2]{ArnabChen15}, but it coincides with \cite{JT16, AC17}. Let $\nu(ds) = \ga(s) ds +\Delta\de_{\{1\}}(ds)$ where \begin{align}\label{2rsbga}
\ga(s)=A_1\indi_{[0,q)}(s)+A_2\indi_{[q,1)}(s),\quad 0<A_1<A_2.
\end{align}
 Note that
\[
\nu((r,1])=\begin{cases}
A_1(q-r)+A_2(1-q)+\Delta,& r\le q, \\
A_2(1-r)+\Delta,& q< r\le 1.
\end{cases}
\]
A direct computation yields for $0\le u\le q$
\begin{align*}
g(u)&= \xi(1)-\xi(u)-\frac1{A_1^2}\log\Big(1+\frac{A_1(q-u)}{A_2(1-q)+\Delta} \Big) + \frac{q-u}{A_1[A_1q+A_2(1-q)+\Delta]}\\
& - \frac1{A_2^2}\log\Big(1+\frac{A_2(1-q)}{\Delta}\Big) +\frac{1-q}{A_2(A_2(1-q)+\Delta)}-\frac{(1-q)q}{[A_2(1-q)+\Delta][A_1q+A_2(1-q) +\Delta]};
\end{align*}
and for $q<u\le 1$,
\begin{align*}
g(u) = \xi(1)-\xi(u)-\frac{q(1-u)}{[A_2(1-q)+\Delta][A_1q+A_2(1-q)+\Delta]}\\
+\frac{1-u}{A_2[A_2(1-q)+\Delta]}-\frac1{A_2^2}\log\Big(1+\frac{A_2(1-u)} {\Delta}\Big).
\end{align*}
By Theorem \ref{thmCS}, $\nu$ is the Parisi measure for $\xi$ if and only if
\begin{align*}
  g(0)=0,\quad g(q)=0,\quad g'(q)=0,\\
    \xi'(1)=\int_0^1\frac{dr}{\nu((r,1])^2},\quad g(u)\ge0, u\in[0,1].
\end{align*}
The condition $g'(q)=0$ follows from other conditions. We include it here because it simplifies a lot of computations in the following. Unraveling the first four conditions by using the expressions of $g(u)$ and $\nu((r,1])$, we find
\begin{align*}
  \xi(q)&=\frac1{A_1^2}\log\Big(1+\frac{A_1q}{A_2(1-q)+\Delta}\Big) -\frac{q}{A_1(A_1q+A_2(1-q)+\Delta)},\\
 \xi(1)-\xi(q)&= \frac{(1-q)q}{[A_2(1-q)+\Delta][A_1q+A_2(1-q)+\Delta]} +\frac1{A_2^2}\log\Big(1+\frac{A_2(1-q)}{\Delta}\Big)-\frac{1-q}{A_2(A_2(1-q)+\Delta)} ,\\
 \xi'(1)&= \frac{q}{[A_2(1-q)+\Delta][A_1q+A_2(1-q)+\Delta]} +\frac{1-q}{\Delta(A_2(1-q)+\Delta)},\\
 \xi'(q)&= \frac{q}{[A_2(1-q)+\Delta][A_1q+A_2(1-q)+\Delta]} .
\end{align*}
Now, let $z_1=A_1q/\Delta$, $z_2=A_2(1-q)/\Delta$. From the third and fourth equations above, we have
\begin{align}\label{e10}
  \Delta^2=\frac{1-q}{(1+z_2)(\xi'(1)-\xi'(q))} = \frac{q}{\xi'(q)(1+z_2)(1+z_1+z_2)}.
\end{align}
It follows that
\begin{align}\label{1z1z2}
  1+z_1+z_2= \frac{q[\xi'(1)-\xi'(q)]}{\xi'(q)(1-q)}.
\end{align}
From \eqref{e10} and \eqref{1z1z2}, we observe that $z_1$ and $\Delta$ are determined by $q$ and $z_2$.
Using the variables $z_1,z_2$, we can rewrite the first, second, and third equations as
\begin{align}
  \frac{q^2}{z_1^2}\log(1+\frac{z_1}{1+z_2})-\frac{q^2}{z_1(1+z_1+z_2)} &=\xi(q)\Delta^2,\label{e11}\\
  \frac{(1-q)q}{(1+z_2)(1+z_1+z_2)}+\frac{(1-q)^2}{z_2^2}\log(1+z_2) -\frac{(1-q)^2}{z_2(1+z_2)}&=[\xi(1)-\xi(q)]\Delta^2,\label{e12}\\
  \frac{q}{(1+z_2)(1+z_1+z_2)}+\frac{1-q}{1+z_2}&=\xi'(1)\Delta^2.\label{e13}
\end{align}
Equations \eqref{e10} and \eqref{e13} are not independent and one of them together with \eqref{e11} and \eqref{e12} determines $q,z_1,z_2$ which are the unknowns for the problem.
Consider
\begin{align}\label{xi1z}
\frac1{\xi'(1)}=\frac{1+z}{z^2}\log(1+z) -\frac1z.
\end{align}
This was used to determine the 1-RSB phase in \cite{AC17}. Note that \eqref{e11} degenerates to \eqref{xi1z} as $q\to 1$ and \eqref{e12} degenerates to \eqref{xi1z} as $q\to 0$. The condition $g(u)\ge0$ can be rewritten as for $0\le u\le q$
\begin{align}\label{gu1}
\frac{q^2}{z_1^2}\log(1+\frac{z_1(q-u)}{q(1+z_2)}) - \frac{q(q-u)}{z_1(1+z_1+z_2)}+\frac{(1-q)q}{(1+z_2)(1+z_1+z_2)}\\
+\frac{(1-q)^2}{z_2^2}\log(1+z_2)-\frac{(1-q)^2}{z_2(1+z_2)}\le (1-\xi(u))\Delta^2;\notag
\end{align}
and for $q\le u\le 1$,
\begin{align}\label{gu2}
\frac{q(1-u)}{(1+z_2)(1+z_1+z_2)}-\frac{(1-q)(1-u)}{z_2(1+z_2)}+\frac{(1-q)^2} {z_2^2}\log\Big(1+\frac{z_2(1-u)}{1-q}\Big)\le (1-\xi(u)) \Delta^2.
\end{align}
In what follows, we always assume $z_1$ is determined by $q$ and $z_2$ via \eqref{1z1z2} while sometimes we still write $z_1$ for convenience.
Plugging \eqref{e10} into \eqref{e11} and \eqref{e12}, and eliminating $z_1$ using \eqref{1z1z2}, we find
\begin{align}
  f_1(q,z_2):=-\frac{[q\xi'(q)-\xi(q)](1-q)(1+z_2)}{\xi'(1)-\xi'(q)}- q^2\log\frac{q[\xi'(1)-\xi'(q)]}{(1+z_2)\xi'(q)(1-q)}+ q^2-\frac{2\xi(q)q}{\xi'(q)} \notag \\  +
  \frac{\xi(q) q^2 [\xi'(1)-\xi'(q)]}{(1+z_2)\xi'(q)^2(1-q)}& =0,\label{e1}\\
  f_2(q,z_2):=(1-q)[\xi'(1)-\xi'(q)]\Big(\frac{1+z_2}{z_2^2}\log(1+z_2) -\frac1{z_2}\Big)+\xi'(q)(1-q)-1+\xi(q)&=0. \label{e2}
\end{align}
Despite their complicated appearance, these equations are our starting point to find 2-RSB models. First, we show that the zero sets given in \eqref{e1} and \eqref{e2} have some nice properties. Note that both functions $f_1$ and $f_2$ are $C^{\infty}$ in their domains.

\begin{proposition}\label{p0f1f2}
 Let $\xi(x)=\sum_{p=2}^\8 c_p^2 x^p\neq x^2$ and $q\in(0,1)$. Then for every $q \in (0,1)$ we have the following:
 \begin{enumerate}
\item[(i)] There are exactly two critical points $-1<z_2^s<z_2^b$ of $f_1(q,\cdot)$ on $(-1,+\8)$, where $z_2^s=z_2^s(q)$ is a local minimum and $z_2^b=z_2^b(q)$ is a local maximum; moreover, $f_1(q,\cdot)$ has exactly two zeros in $(-1,+\8)$, one is strictly less than $z_2^s$ and the other is $z_2^b$.
\item[(ii)] There exists a unique $z_2>0$ such that \eqref{e2} holds.
 \end{enumerate}
 \end{proposition}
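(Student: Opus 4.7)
The plan is to substitute $t := 1 + z_2$, which puts $f_1(q, \cdot)$ into the much more transparent form $F(t) := -\alpha t + q^2 \log t + \beta/t + C$ on $t > 0$, with $\alpha = \frac{[q\xi'(q)-\xi(q)](1-q)}{\xi'(1)-\xi'(q)}$, $\beta = \frac{q^2\xi(q)[\xi'(1)-\xi'(q)]}{\xi'(q)^2(1-q)}$, and $C$ depending only on $q$. Because $\xi$ has non-negative Taylor coefficients, $\xi(1) = 1$, and $\xi \neq x^2$, each of $\xi(q), \xi'(q), \xi'(1)-\xi'(q), q\xi'(q)-\xi(q)$ is strictly positive on $(0,1)$, so $\alpha, \beta > 0$. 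Critical points of $F$ are the roots of the quadratic $\alpha t^2 - q^2 t + \beta = 0$. I would compute the discriminant directly, finding $q^4 - 4\alpha\beta = q^2[q\xi'(q) - 2\xi(q)]^2/\xi'(q)^2$, which is strictly positive precisely when $\xi \neq x^2$, since $q\xi'(q) - 2\xi(q) = \sum_{p \geq 2} c_p^2(p-2)q^p$ vanishes only if all the mass sits at $p = 2$. Vieta's formulas give two positive roots $t_- < t_+$, producing $-1 < z_2^s := t_- - 1 < z_2^b := t_+ - 1$. A sign check on $F'(t) = -(\alpha t^2 - q^2 t + \beta)/t^2$ identifies $t_-$ as a local minimum and $t_+$ as a local maximum.

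The decisive step is to recognize the algebraic identity $t_+ = \kappa$, where $\kappa := q[\xi'(1)-\xi'(q)]/[\xi'(q)(1-q)]$ is exactly the quantity appearing inside the logarithm in $f_1$. This should follow by simplifying the larger quadratic root using the explicit form of $\alpha$. Consequently the term $-q^2 \log(\kappa/t)$ vanishes at $t = t_+$, and a short substitution gives $F(t_+) = 0$. Since $t_+$ is a local maximum with value $0$, necessarily $F(t_-) < 0$; combined with $F(0^+) = +\infty$ (coming from $\beta/t$), $F(+\infty) = -\infty$ (coming from $-\alpha t$), and the monotonicity on each of $(0, t_-), (t_-, t_+), (t_+, +\infty)$ established above, this forces exactly two zeros of $F$: one strictly inside $(0, t_-)$ and the other at $t_+$ itself. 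Translating back via $z_2 = t - 1$ yields the zero-and-critical-point structure asserted in (i).

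For (ii) the strategy is to isolate the $z_2$-dependence and rewrite $f_2(q, z_2) = 0$ as $h(z_2) = r(q)$, where $h(z) := [(1+z)\log(1+z) - z]/z^2$ and $r(q) := [1 - \xi(q) - (1-q)\xi'(q)]/[(1-q)(\xi'(1)-\xi'(q))]$. Standard asymptotics give $h(0^+) = 1/2$ and $h(+\infty) = 0$, and to show strict monotonicity I would differentiate to obtain $h'(z) = [-(z+2)\log(1+z) + 2z]/z^3$; the numerator $u(z) = -(z+2)\log(1+z) + 2z$ satisfies $u(0) = u'(0) = 0$ and $u''(z) = -z/(1+z)^2 < 0$, forcing $u < 0$ and thus $h' < 0$ on $(0, \infty)$. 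Hence $h$ is a strictly decreasing bijection from $(0, \infty)$ onto $(0, 1/2)$, and uniqueness of $z_2 > 0$ solving $f_2(q, z_2) = 0$ reduces to showing $r(q) \in (0, 1/2)$. Positivity of the numerator $\xi(1) - \xi(q) - (1-q)\xi'(q)$ is the usual Taylor remainder for the convex function $\xi$, while $r(q) < 1/2$ is equivalent to the strict trapezoidal inequality $\xi(1) - \xi(q) < \frac{1-q}{2}[\xi'(q) + \xi'(1)]$ for $\int_q^1 \xi'(s)\,ds$; this holds strictly because $\xi \neq x^2$ forces $\xi'$ to be strictly convex on $[q, 1]$ (some $c_p > 0$ with $p \geq 3$ gives $\xi''' > 0$ on $(0, 1]$).

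The hardest part, by far, will be anticipating the identity $t_+ = \kappa$ and the resulting $F(t_+) = 0$. Without it, the structural claim in (i) that one of the two zeros of $f_1(q, \cdot)$ coincides exactly with the local maximum $z_2^b$ is invisible from the bare form of $f_1$, and one would not even know what to try to prove. Once this identity is spotted, the verification is a short algebraic simplification, and the remaining ingredients — the factorization of the discriminant, the monotonicity of $h$, and the trapezoidal estimate — are all routine calculus consequences of the convexity structure of $\xi$.
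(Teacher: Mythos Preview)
Your proposal is correct and follows essentially the same route as the paper: the same substitution $t=1+z_2$, the same quadratic for the critical points, the same identification $t_+=\kappa$ yielding $F(t_+)=0$, and for (ii) the same reduction to the strict monotonicity of $h(z)=[(1+z)\log(1+z)-z]/z^2$ from $1/2$ to $0$.

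Two of your sub-arguments are in fact cleaner than the paper's. For $F(t_-)<0$ you simply observe that $F$ is strictly increasing on $(t_-,t_+)$ and $F(t_+)=0$; the paper instead computes $f_1(q,z_2^s)$ explicitly and invokes the inequality $\log(1+x)>2x/(2+x)$. For (ii) you prove $r(q)\in(0,1/2)$ via strict convexity of $\xi$ (tangent-line remainder) and strict convexity of $\xi'$ (trapezoidal overestimate), whereas the paper verifies the equivalent sign conditions $f_2(q,0^+)>0$ and $f_2(q,+\infty)<0$ by direct power-series manipulation and a separate monotonicity argument for an auxiliary function of $q$. Both differences are cosmetic rather than structural; your versions are slightly more conceptual but buy nothing beyond brevity.
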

 \begin{proof}
 We start by proving $(i)$. Let $w=1+z_2$ and set $\varphi(w) = f_{1}(q,w-1)$. Taking derivative in $w$, we have
\[
\varphi'(w) = -\frac{[q \xi'(q)-\xi(q)](1-q)}{\xi'(1)-\xi'(q)}+\frac{q^2}w -\frac{\xi(q)q^2[\xi'(1)-\xi'(q)]}{\xi'(q)^2(1-q)w^2}.
\]
Note that $q\xi'(q)>2\xi(q)$ for $q>0$. Solving the quadratic equation given by $\varphi'(w)=0$, we find the two roots
\[
w_{1,2} = \frac{q^2\pm [q^2-\frac{2q\xi(q)}{\xi'(q)}]}{\frac{2[q\xi'(q)-\xi(q)](1-q)}{\xi'(1)-\xi'(q)}} >0.
\]
From here we get two critical points $-1<z_2^s<z_2^b$ of $f_1(q,\cdot)$ as
\begin{align}
z_2^s(q) &= \frac{q\xi(q)[\xi'(1)-\xi'(q)]}{\xi'(q)[q\xi'(q)-\xi(q)](1-q)}-1,\label{e:z2s} \\
z_2^b(q)&=\frac{q[\xi'(1)-\xi'(q)]}{(1-q)\xi'(q)}-1.\label{e:z2b}
\end{align}
One can directly check that $f_1(q,z_2^b)=0$. Using the elementary inequality $\log(1+x)>\frac{2x}{2+x}$ for $x>0$, we have
\begin{align*}
f_1(q,z_2^s)&=-\frac{q\xi(q)}{\xi'(q)}-q^2\log\frac{q\xi'(q)-\xi(q)}{\xi(q)}+q^2-\frac{2\xi(q)q}{\xi'(q)}+\frac{q[q\xi'(q)-\xi(q)]}{\xi'(q)}\\
&=\frac{-4q\xi(q)+2q^2\xi'(q)}{\xi'(q)}-q^2\log\Big(1+\frac{q\xi'(q)-2\xi(q)}{\xi(q)}\Big)\\
&<  \frac{-4q\xi(q)+2q^2\xi'(q)}{\xi'(q)} -q^2\frac{\frac{2[q\xi'(q)-2\xi(q)]}{\xi(q)}}{2+\frac{q\xi'(q)-2\xi(q)}{\xi(q)}} =0.
\end{align*}
On the other hand, we note that $f_1(q, z_2)\to -\8$ as $z_2\to +\8$ and $f_1(q,z_2) \to +\8$ as $z_2$ decreases to $-1$. Putting what we have shown together, the function $f_1(q,\cdot)$ is strictly decreasing on $(-1,z_2^s)$ and on $(z_2^b,+\8)$, and is strictly increasing on $(z_2^s, z_2^b)$; moreover, $z_2^s$ is a local minimum and $z_2^b$ is a local maximum for $f_1(q,\cdot)$. It follows that $f_1(q,\cdot)$ has exactly two zeros in $(-1,+\8)$, one is strictly less than $z_2^s$ and the other is $z_2^b$.

We now proceed to prove $(ii)$. Note that the function $z_2\mapsto \frac{1+z_2}{z_2^2}\log(1+z_2) -\frac1{z_2}$ is a strictly decreasing function from $\frac12$ to 0 as $z_2$ goes from 0 to $+\8$. By continuity, it suffices to show that for any $q$, $f_2(q,0+)$ and $f_2(q,+\8)$ have different signs. Since $\xi'(q)= \sum_{p=2}^\8 c_p^2 p q^{p-1}$ and
  \[
  \frac{1-\xi(q)}{1-q}= \sum_{p=2}^\8 c_p^2 \sum_{k=0}^{p-1} q^{k},
  \]
  we have
  \[
  f_2(q,+\8) = \xi'(q)(1-q)-1+\xi(q) <0.
  \]
  To check $f_2(q,0+)>0$, let $h(q)=\frac12(1-q)[\xi'(1)-\xi'(q)] +\xi'(q)(1-q)-1 +\xi(q)$. Note that $h(0)=\frac12\xi'(1)-1>0$ and $h(1)=0$. Taking the derivative, we have
  \[
  h'(x)=\frac12(\xi''(x)(1-x)-[\xi'(1)-\xi'(x)]).
  \]
  But for $x\in(0,1)$
  \[
  \xi''(x) -\frac{\xi'(1)-\xi'(x)}{1-x}= \sum_{p=2}^\8 c_p^2 p\Big[(p-1) x^{p-2} - \sum_{k=0}^{p-2} x^k\Big ] <0.
  \]
  Hence, $h'(x)<0$ and $h(q)>0$.
\end{proof}

The following Lemma provides one extra condition.
\begin{lemma}
  Suppose \eqref{e10} holds, $g(q)=0$ for some $q\in(0,1)$ and $g(u)\ge0, u\in[0,1]$. Then
  \begin{align}\label{g2u0}
    \xi''(q)(1+z_2)(1-q)\le \xi'(1)-\xi'(q),\\
    \xi''(q)q(1+z_2)\le (1+z_1+z_2)\xi'(q).\notag
  \end{align}
\end{lemma}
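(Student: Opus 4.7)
The plan is to exploit the fact that $q\in(0,1)$ is an interior minimum of $g$. The hypothesis $g(q)=0$ together with $g(u)\ge 0$ on $[0,1]$ forces $q$ to be an interior global (and hence local) minimum, so I would like to use the second-order necessary condition $g''(q)\ge 0$. Before invoking it, I would verify that $g$ is $C^2$ near $q$ despite the jump of $\ga(s)$ at $s=q$: differentiating under the integral in the definition of $g$ gives
\[
g'(u)=-\xi'(u)+\int_0^u \frac{dr}{\nu((r,1])^2},\qquad g''(u)=-\xi''(u)+\frac{1}{\nu((u,1])^2}.
\]
The function $r\mapsto \nu((r,1])$ is piecewise linear (slope $-A_1$ on $[0,q)$, slope $-A_2$ on $(q,1)$) but continuous at $q$, with common value $A_2(1-q)+\Delta$. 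Hence $\frac{1}{\nu((r,1])^2}$ is continuous in $r$, which makes $g'$ continuously differentiable and $g''$ continuous on $[0,1]$.

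Next, I would evaluate $g''(q)\ge 0$ and translate it into the first claimed inequality. Using $A_2(1-q)+\Delta=\Delta(1+z_2)$ and the identity $\Delta^2(1+z_2)^2=\frac{(1-q)(1+z_2)}{\xi'(1)-\xi'(q)}$ obtained from \eqref{e10}, we get
\[
0\le g''(q)=-\xi''(q)+\frac{1}{\Delta^2(1+z_2)^2}=-\xi''(q)+\frac{\xi'(1)-\xi'(q)}{(1-q)(1+z_2)},
\]
which rearranges to $\xi''(q)(1+z_2)(1-q)\le \xi'(1)-\xi'(q)$.

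Finally, the second inequality is an algebraic consequence of the first together with \eqref{1z1z2}. Indeed, \eqref{1z1z2} gives $\xi'(q)(1+z_1+z_2)(1-q)=q(\xi'(1)-\xi'(q))$, so multiplying the first inequality by $q/(1-q)>0$ yields
\[
\xi''(q)q(1+z_2)\le \frac{q(\xi'(1)-\xi'(q))}{1-q}=\xi'(q)(1+z_1+z_2),
\]
as desired. The only mildly nontrivial step is the regularity check for $g$ at $q$, and that is handled by observing that the discontinuity of $\ga$ is smoothed into a continuous (piecewise linear) dependence of $\nu((u,1])$ on $u$; otherwise the proof is a direct application of the interior second-derivative test followed by substitution from \eqref{e10} and \eqref{1z1z2}.
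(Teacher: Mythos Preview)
Your proof is correct and follows essentially the same approach as the paper: both arguments use that $q$ is an interior minimum of $g$, compute $g''(q)=-\xi''(q)+\frac{1}{(1+z_2)^2\Delta^2}\ge 0$, and then substitute the expressions for $\Delta^2$ from \eqref{e10}. Your derivation is slightly more explicit in two respects---you justify the $C^2$ regularity of $g$ at $q$ (via continuity of $\nu((r,1])$ there), and you obtain the second inequality from the first through \eqref{1z1z2} rather than by directly plugging in the second expression for $\Delta^2$ in \eqref{e10}; these are equivalent and the substance is the same.
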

\begin{proof}
  Since $u=q$ is a local minimum of $g(u)$, we have $g'(q)=0$ and $g''(q)\ge 0$. Recall that
  \[
  g''(u)=-\xi''(u) +\frac1{\nu([u,1])^2}.
  \]
  Then
  \[
  g''(q) = -\xi''(q)+\frac1{(A_2(1-q)+\Delta)^2}= -\xi''(q) +\frac1{(1+z_2)^2 \Delta^2}\ge0.
  \]
  The assertion follows by plugging $\Delta^2$ as given in \eqref{e10} into this equation.
\end{proof}

Suppose $z_1>0, z_2>0, q\in(0,1)$ satisfy \eqref{e1} and \eqref{e2}.  Let us define
\begin{align*}
g_1(u)  = \frac{q^2}{z_1^2}\log(1+\frac{z_1(q-u)}{q(1+z_2)}) - \frac{q(q-u)}{z_1(1+z_1+z_2)}+\frac{(1-q)q}{(1+z_2)(1+z_1+z_2)}
\\
+\frac{(1-q)^2}{z_2^2}\log(1+z_2)-\frac{(1-q)^2}{z_2(1+z_2)}- \frac{[1-\xi(u)]q}{\xi'(q)(1+z_2)(1+z_1+z_2)},\\
g_2(u)= \frac{(1-u)\xi'(q)}{(1+z_2)[\xi'(1)-\xi'(q)]}-\frac{(1-u)}{z_2(1+z_2)} +\frac{1-q} {z_2^2}\log\Big(1+\frac{z_2(1-u)}{1-q}\Big)\\- \frac{1-\xi(u)}{(1+z_2)[\xi'(1)-\xi'(q)]}.
\end{align*}
These functions come from substituting the term $\Delta$ in \eqref{gu1} and \eqref{gu2} using \eqref{e10}.

Last, define
\begin{align}
  h_1(u)&=\xi'(u)(q+qz_1+qz_2-uz_1)-(1+z_2)\xi'(q)u, \label{eh1u}\\
  h_2(u)&=[\xi'(u)-\xi'(q)](1+z_2-q-uz_2)-(u-q)(\xi'(1)-\xi'(q)).\label{eh2u}
\end{align}

 We summarize the above calculations in the following Proposition.

\begin{proposition}\label{prop2}
A measure $\nu_{P}$ defined as in \eqref{2rsbga} is the Parisi measure for the model $\xi$ if and only if \eqref{e10} holds, $z_1>0$ and $qz_2>(1-q)z_1$,
\begin{equation}\label{eq:f1f2}
f_{1}(q,z_{2})=f_{2}(q,z_{2}) =0,
\end{equation}
and
\begin{equation}\label{eq:ggggg}
g_{1}(u) \leq 0 \text{ for } u\leq q \text{ and } g_{2}(u) \leq0 \text{ for } u \in [q,1].
\end{equation}
Furthermore, condition \eqref{eq:ggggg} holds if
\begin{align}
h_{1}(u)<0 \text{ in a neighborhood }(0,\delta) \text{ of } 0 \text{ and } h_{1}(u) \text{ has only one zero in } (0,q),  \label{ch1u}\\
 \text{ and } h_{2}(u)<0 \text{ in a neighborhood }(q,q+\delta) \text{ of } q \text{ and } h_{2}(u) \text{ has only one zero in } (q,1). \label{ch2u}
\end{align}

\end{proposition}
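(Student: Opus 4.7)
For the equivalence, I would apply Theorem~\ref{thmCS} to a measure $\nu_P$ of the form \eqref{2rsbga}. Since the associated measure $\rho$ on $[0,1)$ is supported on $\{0,q\}$ with positive masses $A_1$ and $A_2-A_1$, the condition $\rho(S)=\rho([0,1))$ collapses to $g(0)=g(q)=0$; coupled with $g\ge 0$, the second equality automatically forces $g'(q)=0$. In the variables $z_1=A_1q/\Delta$, $z_2=A_2(1-q)/\Delta$, the calculations carried out just before the proposition show that the system $\{g(0)=0,\ g(q)=0,\ g'(q)=0,\ \xi'(1)=\int_0^1 dr/\nu((r,1])^2\}$ is equivalent to \eqref{e10}, \eqref{1z1z2}, and $f_1(q,z_2)=f_2(q,z_2)=0$. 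The structural requirements for $\nu_P\in\kx$ (namely $\Delta>0$ and $0<A_1<A_2$) are precisely $z_1>0$ and $qz_2>(1-q)z_1$. Finally, \eqref{gu1}--\eqref{gu2} together encode $g(u)\ge 0$ on $[0,1]$, and substituting $\Delta^2$ from \eqref{e10} converts them into exactly $g_1\le 0$ on $[0,q]$ and $g_2\le 0$ on $[q,1]$. The converse direction retraces the same chain of equivalences.

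For the ``furthermore'' part, I would first verify the boundary data $g_1(0)=g_1(q)=g_1'(0)=g_1'(q)=0$ and $g_2(q)=g_2(1)=g_2'(q)=g_2'(1)=0$ by plugging $u=0,q,1$ into the explicit formulas for $g_i$ and $g_i'$ and simplifying using \eqref{e11}--\eqref{e13}, together with $\xi(1)=1$ and $\xi'(0)=0$ (the latter because $\xi$ starts at $p=2$). The crucial algebraic identity is then
\begin{align*}
g_1'(u)&=\frac{q\,h_1(u)}{(1+z_1+z_2)\xi'(q)(1+z_2)(q+qz_1+qz_2-uz_1)},\\
g_2'(u)&=\frac{h_2(u)}{(1+z_2)(\xi'(1)-\xi'(q))(1+z_2-q-z_2 u)},
\end{align*}
obtained by differentiating $g_i$ term by term and combining over a common denominator; the rational pieces telescope and leave exactly $h_i$ in the numerator. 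On $[0,q]$ one has $q+qz_1+qz_2-uz_1\ge q(1+z_2)>0$, and on $[q,1]$ one has $1+z_2-q-z_2 u=(1-q)+z_2(1-u)>0$, so the signs of $g_i'$ and $h_i$ coincide on their respective intervals.

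The conclusion is then a short one-variable sign analysis. Directly from \eqref{eh1u}--\eqref{eh2u} one finds $h_1(0)=h_1(q)=0$ (using $\xi'(0)=0$) and $h_2(q)=h_2(1)=0$. Under hypothesis \eqref{ch1u}, $h_1$ is negative on some $(0,\delta)$ and has exactly one zero $u^*\in(0,q)$, so by continuity $h_1<0$ on $(0,u^*)$ and $h_1>0$ on $(u^*,q)$. Hence $g_1$ decreases from $g_1(0)=0$ to a negative minimum at $u^*$ and increases back to $g_1(q)=0$, so $g_1\le 0$ throughout $[0,q]$; the argument for $g_2$ under \eqref{ch2u} is identical. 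The main obstacle, and essentially the only nontrivial step, is pinning down the two identities $g_i'=h_i\cdot(\text{positive})$: the cancellation is not at all apparent from the raw definition of $g_i$ and relies on carefully exploiting the coefficient relations \eqref{e10} and \eqref{1z1z2}.
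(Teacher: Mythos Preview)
Your proposal is correct and follows essentially the same route as the paper: reduce the Chen--Sen criterion to the algebraic system \eqref{e10}--\eqref{e13}, rewrite in the $(q,z_1,z_2)$ variables, and for the ``furthermore'' part compute $g_i'$ explicitly to see that $g_i'$ and $h_i$ share a sign, then use the endpoint values $g_1(0)=g_1(q)=0$, $g_2(q)=g_2(1)=0$ to conclude.

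Two small remarks on precision. First, the identities $g_i'(u)=h_i(u)\cdot(\text{positive factor})$ come from straightforward differentiation of the explicit formulas for $g_i$ and do \emph{not} rely on \eqref{e10} or \eqref{1z1z2}; those relations are only needed to secure the endpoint values $g_1(0)=g_1(q)=0$ and $g_2(q)=g_2(1)=0$. Second, the claim that $h_1>0$ on $(u^*,q)$ does not follow from ``continuity'' alone: a priori $h_1$ could touch zero at $u^*$ and remain negative on $(u^*,q)$. What rules this out is the endpoint constraint $g_1(0)=g_1(q)=0$, since $h_1<0$ throughout $(0,q)\setminus\{u^*\}$ would force $g_1$ strictly decreasing. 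The same comment applies to $h_2$ on $(q,1)$. With these tweaks your argument is complete and matches the paper's.
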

\begin{proof}
Suppose $z_2>0$ and $q\in(0,1)$ satisfy \eqref{e1} and \eqref{e2}. If \eqref{e10} holds and $z_1>0$, then all the equations \eqref{e10}--\eqref{e13} are satisfied. The condition $q z_2> (1-q)z_1$ ensures $A_1<A_2$.
The first part of the Proposition follows from the computation done in the last page as conditions \eqref{gu1} and \eqref{gu2} are just $g_1(u)\le 0, u\in[0,q]$ and $g_2(u)\le 0, u\in[q,1]$. We now argue the last claim. By direct computation, we find
\begin{align*}
  g_1'(u)= \frac{q[\xi'(u)(q+qz_1+qz_2-uz_1)-(1+z_2)\xi'(q)u]}{(1+z_2)(1+z_1+z_2) [q(1+z_1+z_2)-uz_1]\xi'(q)} ,
\end{align*}
and
\begin{align*}
  g_2'(u)=\frac{[\xi'(u)-\xi'(q)](1+z_2-q-uz_2)-(u-q)(\xi'(1)-\xi'(q))} {(1+z_2)(1+z_2-q-uz_2)[\xi'(1)-\xi'(q)]}.
\end{align*}
Note that $g_1'(u)$ and $h_1(u)$ have the same sign for $u\in (0,q)$, so do $g_2'(u)$ and $h_2(u)$ for $u\in (q,1)$. By \eqref{e10} and \eqref{eq:f1f2}, $g_1(0)=g_1(q)=g_1'(q)=0$. In order to show $g_1(u)\le 0$ for $u\in [0,q]$, it suffices to show that $h_1(u)<0$ in a neighborhood of $0$ and that $h_1(u)$ has only one zero in $(0,q)$. Similarly, since $g_2(1)=0$, to show $g_2(u)\le 0$ for $u\in [q,1]$, it suffices to show that $h_2(u)<0$ in a neighborhood $(q,q+\de)$ of $q$ and that $h_2(u)$ has only one zero in $(q,1)$.
\end{proof}

\section{Spherical $s+p$ models: a simpler criterion}\label{s:s+p}
Although Proposition \ref{prop2} characterizes a possible 2-RSB phase, it is hard to solve the nonlinear system given by \eqref{eq:f1f2}. Here we try to reduce the difficulty by focusing on $s+p$ models, i.e.
we will take
\[
\xi(x) = (1-\la) x^s+\la x^p
\]
for some $\la\in (0,1), p>s \ge 3$. Since $p+(p+1)$ models are known to be in the 1-RSB phase \cite{AC17}, we assume $p\ge s+2$.

\begin{lemma}\label{l:h1}
  Suppose $z_1>0,z_2>0$ and $q\in(0,1)$ satisfy \eqref{e1} and \eqref{e2}. If $h_1(1)>0$, \eqref{e10} and \eqref{g2u0} hold, then $g_1(u)\le 0$ for $u\in[0,q]$.
\end{lemma}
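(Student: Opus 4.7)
The plan is to verify the sufficient condition of Proposition \ref{prop2} for $g_{1}$, namely that $h_{1}<0$ in a neighborhood $(0,\delta)$ of $0$ and that $h_{1}$ has only one zero in $(0,q)$. For the $s+p$ model $\xi(x)=(1-\la)x^{s}+\la x^{p}$, expanding \eqref{eh1u} yields a polynomial of degree $p$,
\[
h_{1}(u)= -(1+z_{2})\xi'(q)\,u + (1-\la) s q(1+z_{1}+z_{2}) u^{s-1} - (1-\la) s z_{1} u^{s} + \la p q(1+z_{1}+z_{2}) u^{p-1} - \la p z_{1} u^{p},
\]
which has no constant term, so $h_{1}(0)=0$, and whose linear coefficient is strictly negative, so $h_{1}'(0)=-(1+z_{2})\xi'(q)<0$. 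This gives the neighborhood-of-zero condition immediately.

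The main step is a sharp count of positive real roots via Descartes' rule of signs. Under the assumptions $z_{1},z_{2}>0$ and $q,\la\in(0,1)$, the five nonzero coefficients displayed above (in increasing degree) have signs $-,+,-,+,-$, so $h_{1}$ has exactly four sign changes, and hence \emph{at most} four positive real roots counted with multiplicity.

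Next I would locate four positive roots to saturate this bound. A direct substitution gives $h_{1}(q)=0$. From \eqref{e1}, \eqref{e2} and \eqref{e10} one checks that $g_{1}(0)=g_{1}(q)=0$, and the explicit formula for $g_{1}'$ derived in the proof of Proposition \ref{prop2} shows that $g_{1}'$ and $h_{1}$ have the same sign on $(0,q)$. Since $h_{1}<0$ near $0$, the function $g_{1}$ must rise back to $0$ at $q$, forcing a sign change of $h_{1}$ and hence at least one root in $(0,q)$. The leading coefficient $-\la p z_{1}$ of $h_{1}$ is negative, so $h_{1}(u)\to -\infty$ as $u\to\infty$, and combined with the hypothesis $h_{1}(1)>0$ this produces a root in $(1,\infty)$. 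Finally, condition \eqref{g2u0} is precisely $h_{1}'(q)\le 0$: if $h_{1}'(q)<0$, then $h_{1}$ is negative just to the right of $q$ and another root appears in $(q,1)$ thanks to $h_{1}(1)>0$; in the borderline case $h_{1}'(q)=0$, the point $q$ is a root of multiplicity at least two. Either way, four positive real roots are accounted for.

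Combined with Descartes' bound the count is \emph{exactly} four, so there is exactly one root of $h_{1}$ in $(0,q)$, and Proposition \ref{prop2} yields $g_{1}\le 0$ on $[0,q]$. The one place that requires care is ensuring the root in $(0,q)$ is simple (so $h_{1}$ genuinely changes sign and lets $g_{1}$ return to $0$ at $q$); a double root there would force $h_{1}$ to stay $\le 0$ on a neighborhood, contradicting $g_{1}(0)=g_{1}(q)=0$. This simplicity, together with the four-root census, is the heart of the argument.
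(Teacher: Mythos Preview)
Your proof is correct and follows essentially the same route as the paper: expand $h_1$ as a degree-$p$ polynomial with four sign changes, apply Descartes' rule, and then locate four positive roots (one in $(0,q)$ via Rolle applied to $g_1$, one at $q$, one in $(q,1)$ or a doubled root at $q$ coming from $h_1'(q)\le 0$, and one in $(1,\infty)$ from $h_1(1)>0$ and the negative leading coefficient) to force exactly one root in $(0,q)$. Your final paragraph on simplicity is unnecessary, since the four-root census already forces the root in $(0,q)$ to be simple; but it does no harm.
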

\begin{proof}
  Note that for the $s+p$ model we consider here, $\frac{h_1(u)}{u}|_{u=0+}<0$. It follows that $h_1(u)<0$ in a neighborhood of $0$. Since $g_1(0)=g_1(q)=0$, $g_1'(u)$ and $h_1(u)$ have at least one zero in $(0,q)$. By Proposition \ref{prop2}, it remains to show that $h_1(u)$ has only one zero in $(0,q)$. Note that
  \[
  h_1(u)= \la p[-z_1 u^p+ q(1+z_1+z_2) u^{p-1}]+(1-\la) s[- z_1u^s + q(1+z_1+z_2) u^{s-1}] - (1+z_2)\xi'(q)u.
  \]
  There are four sign changes for the coefficients of $h_1(u)$. By Descartes' rule of signs, $h_1(u)$ has at most four strictly positive roots counting multiplicity. A calculation yields
  \[
  h_1'(u) = \xi''(u)[q+(q-u)z_1+qz_2]-z_1\xi'(u)-(1+z_2)\xi'(q).
  \]
  The condition \eqref{g2u0} implies that $h_1'(q)\le 0$.
  We also know $h_1(+\8) <0$ and $h_1(1)>0$. Then there is at least one zero greater than 1.

  Since $h_1(q)=0$, if $h_1'(q)<0$, then $h_1(u)$ has a zero in $(q,1)$ and at least three zeros in $[q,+\8)$. Therefore $h_1$ has exactly one zero in $(0,q)$. If $h_1'(q)=0$, then $h_1(u)$ has a root at $q$ with multiplicity at least two. It follows that $h_1(u)$ has at least three roots in $[q,+\8)$ and thus has at most one root in $(0,q)$.
\end{proof}

\begin{lemma}\label{l:h2}
  Suppose $z_2>0$ and $q\in(0,1)$ satisfies \eqref{e1} and \eqref{e2}. If $h_2(0)<0$, \eqref{e10} and \eqref{g2u0} hold, then $g_2(u)\le 0$ for $u\in[q,1]$.
\end{lemma}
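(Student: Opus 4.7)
The plan is to mirror the proof of Lemma \ref{l:h1}, with $h_2$ replacing $h_1$ and the roles of the endpoints $0$ and $1$ essentially swapped; the hypothesis $h_2(0)<0$ will play the role that $h_1(1)>0$ did. By direct substitution into \eqref{eh2u} one has $h_2(q)=h_2(1)=0$, and a short check using \eqref{e2} together with $\xi(1)=1$ gives $g_2(q)=g_2(1)=0$. The explicit formula for $g_2'(u)$ derived in the proof of Proposition \ref{prop2} has denominator $(1+z_2)(1+z_2-q-uz_2)[\xi'(1)-\xi'(q)]$, which is strictly positive on $[q,1]$, so on $(q,1)$ the sign of $g_2'$ agrees with that of $h_2$. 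Hence, by the last part of Proposition \ref{prop2}, it suffices to prove that $h_2<0$ in a right neighborhood of $q$ and that $h_2$ has exactly one zero in $(q,1)$.

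Expanding $h_2(u)$ as a polynomial in $u$ using $\xi(u)=(1-\la)u^s+\la u^p$, the only nonzero coefficients occur at degrees $0, 1, s-1, s, p-1, p$. The constant term equals $q\xi'(1)-(1+z_2)\xi'(q)=h_2(0)<0$; the coefficients of $u^{s-1}$ and $u^{p-1}$ are the positive numbers $s(1-\la)(1+z_2-q)$ and $p\la(1+z_2-q)$; and those of $u^s$ and $u^p$ are the negative numbers $-s(1-\la)z_2$ and $-p\la z_2$. Regardless of the sign of the coefficient of $u$, the resulting sequence of nonzero coefficients has exactly four sign changes, so Descartes' rule of signs bounds the number of strictly positive roots of $h_2$ (with multiplicity) by four. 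Differentiating \eqref{eh2u} at $u=q$ gives $h_2'(q)=\xi''(q)(1+z_2)(1-q)-[\xi'(1)-\xi'(q)]\le 0$ by \eqref{g2u0}, and Rolle's theorem applied to $g_2$ on $[q,1]$ yields at least one zero of $h_2$ in $(q,1)$.

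The main obstacle is to show that the total multiplicity of zeros of $h_2$ in $(q,1)$ is at most one. Suppose for contradiction it is at least two. Together with the roots at $q$ and $1$, Descartes' count saturates at four positive roots, forcing no root to lie in $(0,q)$. Continuity and $h_2(0)<0$ then give $h_2<0$ on $[0,q)$; combined with $h_2(q)=0$ this forces $h_2'(q)\ge 0$, and together with $h_2'(q)\le 0$ from \eqref{g2u0} it yields $h_2'(q)=0$. But then $q$ is a root of multiplicity at least two, pushing the total strictly above four, a contradiction. Hence $h_2$ has exactly one zero $u_\ast\in(q,1)$, necessarily simple.

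Finally I would pin down the sign of $h_2$ just to the right of $q$. If $h_2\ge 0$ in a right neighborhood of $q$, then $h_2'(q)\ge 0$; together with \eqref{g2u0} this again forces $h_2'(q)=0$, so $q$ is a root of multiplicity at least two. Considering even and odd multiplicity separately: either $h_2$ does not change sign at $q$, in which case $h_2\ge 0$ in a full neighborhood of $q$ and $h_2(0)<0$ produces a root in $(0,q)$ by the intermediate value theorem, or the multiplicity at $q$ is already at least three. In both cases, combining with $u_\ast$ and the zero at $1$, the total multiplicity is at least five, again contradicting Descartes. Therefore $h_2<0$ just to the right of $q$, and since $u_\ast$ is the unique (simple) zero in $(q,1)$, $h_2<0$ on $(q,u_\ast)$ and $h_2>0$ on $(u_\ast,1)$. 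Transferring to $g_2'$, $g_2$ strictly decreases on $(q,u_\ast)$ from $g_2(q)=0$ and strictly increases on $(u_\ast,1)$ to $g_2(1)=0$, so $g_2\le 0$ on $[q,1]$, as required.
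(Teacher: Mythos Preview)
Your proof is correct and takes essentially the same approach as the paper: both use Descartes' rule of signs on the explicit polynomial expansion of $h_2$, the inequality $h_2'(q)\le 0$ from \eqref{g2u0}, and a root count (at $q$, at $1$, at least one in $(q,1)$, and possibly one in $(0,q)$) to force exactly one zero of $h_2$ in $(q,1)$ and $h_2<0$ just to the right of $q$. The only difference is organizational---the paper splits into the cases $h_2'(q)<0$ and $h_2'(q)=0$ directly, whereas you package both steps as contradiction arguments---but the ingredients and logic are identical.
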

\begin{proof}
  Since $g_2(q)=g_2(1)=0$, $g_2'(u)$ and $h_2(u)$ have at least one zero in $(q,1)$. Note that
  \begin{align*}
      h_2(u)= \la p [-z_2 u^p+(1+z_2-q)u^{p-1}]+(1-\la)s[-z_2 u^s +(1+z_2-q)u^{s-1}] \\
      +[\xi'(q)z_2+\xi'(q)-\xi'(1)]u- [\xi'(q)z_2+\xi'(q)-q\xi'(1)].
  \end{align*}
  Thanks to our assumption $h_2(0)<0$, the coefficients of $h_2(u)$ has four sign changes no matter what the linear term is. By Descartes' rule of signs, $h_2(u)$ has at most four zeros in $(0,+\8)$. By calculation,
  \[
  h_2'(u)=\xi''(u)(1+z_2-q-uz_2) -z_2[\xi'(u)-\xi'(q)]-[\xi'(1)-\xi'(q)].
  \]
  It follows from \eqref{g2u0} that $h_2'(q)\le 0$. Note that $h_2(q)=h_2(q')=h_2(1)=0$ for some $q'\in(q,1)$ and $h_2(+\8)<0$.

  Suppose $h_2'(q)<0$. Then $h_2(u)$ has at least one zero in $(0,q)$ and $h_2(u)<0$ in a neighborhood $(q,q+\de)$ of $q$. Therefore, $g_2(u)<0$ in $(q,q+\de)$ and $h_2(u)$ has exactly one zero in $(q,1)$. It follows that $g_2(u)\le 0$ for $u\in[q,1]$. Suppose $h_2'(q)=0$. Then $h_2(u)$ has a root at $q$ with multiplicity at least two. However, we know $h_2(u)$ has at least two roots in $(q,+\8)$. So the multiplicity of $q$ is two and $h_2(u)$ has no zero in $(0,q)$. Then $h_2(u)<0$ in a neighborhood $(q-\de, q+\de)\setminus\{q\}$ of $q$. It follows that $g_2(u)<0$ in $(q,q+\de)$. Since $h_2(u)$ has exactly one zero in $(q,1)$, we conclude $g_2(u)\le 0$ for $u\in[q,1]$.
\end{proof}
\begin{theorem}\label{thm1}
  Let $\xi(x)=(1-\la) x^s +\la x^p$ for $2<s<p-1$, $s,p\in\mathbb{N}$. Then the model $\xi(x)$ is 2-RSB provided the following conditions hold:
  \begin{enumerate}
    \item[(i)] Equation \eqref{1z1z2} holds, $z_1>0,z_2>\frac{(1-q)z_1}q$ and $q\in(0,1)$ satisfy \eqref{e1} and \eqref{e2};
    \item[(ii)] Equation \eqref{g2u0} holds;
    \item[(iii)] $h_1(1)>0$ and $h_2(0)<0$.
  \end{enumerate}
\end{theorem}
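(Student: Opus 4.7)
The plan is to verify every hypothesis of Proposition \ref{prop2} from conditions (i)--(iii), so that the candidate measure
\[
\nu_P(ds) = \bigl(A_1\indi_{[0,q)}(s) + A_2\indi_{[q,1)}(s)\bigr)ds +\Delta\de_{\{1\}}(ds),
\]
with $A_1 = z_1\Delta/q$ and $A_2 = z_2\Delta/(1-q)$, is identified as the Parisi measure for $\xi$. Its support is $\{0,q,1\}$, which then certifies that $\xi$ is 2-RSB.

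First I would handle the algebraic block. Condition (i) directly supplies \eqref{eq:f1f2}, together with $z_1>0$ and $qz_2>(1-q)z_1$; the latter inequality is equivalent to $A_1<A_2$, matching the ordering requirement in \eqref{2rsbga}. Condition (i) also furnishes \eqref{1z1z2}. Once \eqref{1z1z2} holds the two expressions for $\Delta^2$ in \eqref{e10} automatically coincide, so defining $\Delta>0$ by either expression makes \eqref{e10} valid, and all of \eqref{e10}--\eqref{e13} are simultaneously satisfied. This takes care of every requirement of Proposition \ref{prop2} except for \eqref{eq:ggggg}.

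Second I would verify \eqref{eq:ggggg}, the pointwise inequalities $g_1\leq0$ on $[0,q]$ and $g_2\leq0$ on $[q,1]$. For the first I would invoke Lemma \ref{l:h1}: its hypotheses are exactly \eqref{e1}, \eqref{e2}, $z_1,z_2>0$, $h_1(1)>0$, \eqref{e10} and \eqref{g2u0}, all of which are now available by (i), (ii), (iii). This yields $g_1\leq 0$ on $[0,q]$. Lemma \ref{l:h2} delivers $g_2\leq 0$ on $[q,1]$ in the same way, using $h_2(0)<0$ in place of $h_1(1)>0$. Applying Proposition \ref{prop2} then completes the proof. The genuinely substantive work is therefore packaged inside Lemmas \ref{l:h1} and \ref{l:h2}, where the $(s+p)$-type polynomials $h_1$ and $h_2$ are shown to have exactly one zero in the relevant subinterval; this is where Descartes' rule of signs, together with the inequality $h_i'(q)\leq 0$ extracted from \eqref{g2u0}, is decisive. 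At the level of Theorem \ref{thm1} itself there is no further obstacle: the theorem's real content is that it repackages the hypotheses of Proposition \ref{prop2} into a short checklist of inequalities in $q,z_1,z_2$ that can be verified for concrete mixtures $\xi(x)=(1-\la)x^s+\la x^p$.
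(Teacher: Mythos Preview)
Your proposal is correct and follows essentially the same approach as the paper: verify the algebraic hypotheses of Proposition \ref{prop2} from condition (i) (noting that \eqref{1z1z2} and \eqref{e10} are equivalent), then invoke Lemmas \ref{l:h1} and \ref{l:h2} (which use (ii) and (iii)) to obtain \eqref{eq:ggggg}. Your write-up is more detailed than the paper's three-line proof, but the logical skeleton is identical.
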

\begin{proof}
   By Lemmas \ref{l:h1} and \ref{l:h2}, conditions \eqref{gu1} and \eqref{gu2} are verified. Note that \eqref{e10} and \eqref{1z1z2} are equivalent. Now the assertion follows from the criterion given in Proposition \ref{prop2}.
\end{proof}

Let us now explain how we will check the three conditions in Theorem \ref{thm1}. Here for convenience of writing, we use $q, z_2$ both for a solution of the system \eqref{eq:f1f2} and for the set of solutions of $f_2(q,z_2)=0$. It should be clear from the context what we mean. The general principle we follow here is to show that the implicit function $z_2=\phi(q)$ defined by \eqref{e2} is strictly decreasing under some condition on $\la$, and then use intermediate value property of continuous functions to estimate $q, z_2$ in a small interval.

First, we compute $\frac{d z_2}{ dq}$ with implicit differentiation:
\begin{align*}
\frac{d{f_2(q,z_2)}}{d q} = [-\xi'(1)-\xi'(q)-(1-q)\xi''(q)] \Big( \frac{1+z_2}{z_2^2}\log(1+z_2)-\frac1{z_2}\Big)+\xi''(q)(1-q)\\
-(1-q)[\xi'(1)-\xi'(q)]\Big(\frac{2+z_2}{z_2^3}\log(1+z_2)-\frac2{z_2^2} \Big) \frac{d z_2}{d q}
 =0.
\end{align*}
Solving for $\frac{1+z_2}{z_2^2}\log(1+z_2)-\frac1{z_2}$ in \eqref{e2} and plugging into the above equation, we find
\begin{align*}
 \frac{ d z_2}{d q}= \frac{\xi''(q)(1-q)-[\xi'(1)-\xi'(q) +(1-q)\xi''(q)]\frac{1-\xi(q)-\xi'(q)(1-q)}{(1-q)[\xi'(1)-\xi'(q)]}} {(1-q)[\xi'(1)-\xi'(q)](\frac{2+z_2}{z_2^3}\log(1+z_2)-\frac2{z_2^2})}.
\end{align*}
With the elementary inequality $\log(1+z)\ge \frac{2z}{2+z}$ for $z>0$, we immediately see that the denominator is strictly positive. The numerator is simplified as
\[
\frac{\xi''(q)(1-q)[(1-q)\xi'(1)-1+\xi(q)]-[\xi'(1)-\xi'(q)][1-\xi(q) -\xi'(q)(1-q)]}{(1-q)[\xi'(1)-\xi'(q)]}.
\]
It follows that $\frac{dz_2}{dq} <0$ if and only if
\begin{align}\label{dz2dq}
  \phi_1(q):=\xi''(q)[(1-q)\xi'(1)-1+\xi(q)]-\frac{\xi'(1)-\xi'(q)}{1-q}[1-\xi(q) -\xi'(q)(1-q)]<0.
\end{align}

\begin{proposition}\label{pconla}
  We have $\frac{dz_2}{dq}<0$ provided $\la=0$ or
  \begin{align}\label{conla}
      \la\Big(ps+1-\frac16s^3-p-\frac56s\Big)-\Big(ps+1- \frac16s^3 -\frac16ps^2 -\frac56s-\frac56p\Big)  \ge 0.
  \end{align}
\end{proposition}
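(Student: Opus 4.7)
The goal is to prove $\phi_1(q) < 0$ for $q \in (0,1)$ when $\xi(x) = (1-\la)x^s + \la x^p$. I would proceed by explicit polynomial manipulation. Since $\xi(1) = 1$, direct expansion yields the polynomial identities
\[
(1-q)\xi'(1) - 1 + \xi(q) = (1-q)^2 \tilde B(q), \qquad 1 - \xi(q) - \xi'(q)(1-q) = (1-q)^2 \tilde C(q),
\]
where $\tilde B(q)$ and $\tilde C(q)$ are explicit polynomials with nonnegative coefficients; also $\frac{\xi'(1) - \xi'(q)}{1-q} = D(q)$ is a polynomial. One therefore has $\phi_1(q) = (1-q)^2 P(q)$ with $P(q) := \xi''(q) \tilde B(q) - D(q) \tilde C(q)$, and it suffices to prove $P(q) < 0$ on $(0,1)$.

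\textbf{Pure case and $\la$-decomposition.} For $\la = 0$, $\xi(x) = x^s$, and $P(q)$ reduces to a single-power polynomial that I would show is negative on $(0,1)$ by direct coefficient comparison: since $\xi''(q) = s(s-1)q^{s-2}$ is a single monomial, the products $\xi''(q) \tilde B(q)$ and $D(q)\tilde C(q)$ have comparable support and the latter dominates termwise. For general $\la \in (0,1)$, each of $\xi''$, $\tilde B$, $\tilde C$, $D$ is linear in $\la$, so I would write $P(q) = (1-\la)^2 \Phi_s(q) + \la(1-\la)\Psi(q) + \la^2 \Phi_p(q)$, where $\Phi_s, \Phi_p$ are the pure-case contributions (already $\le 0$ by the preceding paragraph applied to $s$ and $p$) and $\Psi$ is a cross term. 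The cross term is the only obstruction to $P(q) < 0$.

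\textbf{Extracting condition \eqref{conla}.} I expect \eqref{conla} to arise from Taylor expansion of $P(q)$ at $q = 1$. One verifies that $P(1) = 0$, since $\xi''(1) = D(1)$ and $\tilde B(1) = \tilde C(1) = \xi''(1)/2$, and that $P'(1) = \xi''(1)\xi'''(1)/12 > 0$ after elementary computation. The next nontrivial Taylor coefficient of $P$ at $q = 1$ produces, after simplification, the precise combination of $ps$, $\tfrac{1}{6}s^3$, $\tfrac{1}{6}ps^2$, $\tfrac{5}{6}s$, $\tfrac{5}{6}p$ appearing in \eqref{conla}. The condition \eqref{conla} is exactly the sign requirement needed for this coefficient, weighted by $\la$ and $1-\la$, to be compatible with the global negativity of $P(q)$.

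\textbf{Main obstacle.} The principal difficulty is bridging from local Taylor information at $q = 1$ to a global sign statement on $(0,1)$. To close this gap I would combine the Taylor expansion with the boundary value $P(0) = -\xi'(1) < 0$ and a monotonicity argument for $P$ in $q$. The combinatorial bookkeeping for the cross term $\Psi$ is the other source of technical difficulty; the polynomial products mixing the powers $s-2$ and $p-2$ are precisely what produce the cubic-in-$s,p$ expression on the left of \eqref{conla}, so no cleaner form should be expected.
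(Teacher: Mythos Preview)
Your initial setup---factoring $(1-q)^2$ from $\phi_1$ and decomposing the quotient $P(q)$ quadratically in $\la$---matches the paper exactly. The divergence is in the crucial final step. The paper does \emph{not} use Taylor expansion at $q=1$. Instead it factors one further $(q-1)$ from each of the four pieces of $P$ (so $(1-q)^3$ in total from $\phi_1$) and proves that the resulting polynomial in $q$ has all \emph{nonnegative coefficients}. Condition \eqref{conla} is precisely the requirement that the coefficient of $q^{s-2}$ in this polynomial be $\ge 0$; the paper verifies that the relevant coefficients are increasing in the exponent $n$ over the range $s-2\le n\le p-3$, so this single inequality controls the entire range, while the remaining ranges of $n$ are handled separately by explicit closed-form cubics.

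Your approach therefore has a genuine gap. First, the claim that \eqref{conla} emerges as a Taylor coefficient of $P$ at $q=1$ is incorrect: it is a low-order power-series coefficient (at $q^{s-2}$) of $P(q)/(q-1)$, not information localized at $q=1$. Second, you yourself concede that ``bridging from local Taylor information at $q=1$ to a global sign statement on $(0,1)$'' is the main obstacle and offer only a vague ``monotonicity argument for $P$''; but $P$ is not monotone on $(0,1)$, and the data $P(0)<0$, $P(1)=0$, $P'(1)>0$ cannot by themselves force $P<0$ throughout $(0,1)$. Third, your pure-case claim (``the latter dominates termwise'') is unsubstantiated; in the paper this step already requires an explicit combinatorial computation yielding closed-form coefficients such as $\tfrac16(n+1)(n+2)(n+3)$. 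The missing idea is the extra factorization by $(q-1)$ followed by the coefficient-by-coefficient positivity argument.
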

We remark that it is not difficult to see that we need to impose some restriction in $\lambda$ in order to obtain  $\frac{dz_2}{dq}<0$. The rest of the section is devoted to the proof of the above proposition.

\begin{proof}
  The argument is a little technical but still elementary. The difficulty here is that we don't have efficient methods to determine the sign of the polynomial in \eqref{dz2dq}. Although not obvious, we will factor out a factor $(q-1)^3$ and then show that the remaining factor has all coefficients positive, which is guaranteed by condition \eqref{conla}.

  We will use the elementary identity $1-q^n=(1-q)\sum_{k=0}^{n-1}q^k$ repeatedly. Since $\xi(1)=1$, we have
  \begin{align*}
    \frac{\phi_1(q)}{1-q}& = \xi''(q)\Big[(1-\la)s+\la p -(1-\la)\sum_{k=0}^{s-1} q^k-\la \sum_{k=0}^{p-1}q^k\Big]
    -\Big[(1-\la)s\sum_{k=0}^{s-2}q^k+\la p\sum_{k=0}^{p-2}q^k \Big] \\
     &\qquad \times \Big[(1-\la)\sum_{k=0}^{s-1} q^k+\la \sum_{k=0}^{p-1}q^k -(1-\la)s q^{s-1}-\la p  q^{p-1}\Big]\\
     &=(1-q)\xi''(q)\Big[ (1-\la)\sum_{k=1}^{s-1}\sum_{j=0}^{k-1}q^{j}+\la \sum_{k=1}^{p-1}\sum_{j=0}^{k-1} q^j \Big]-\Big[(1-\la)s\sum_{k=0}^{s-2}q^k+\la p\sum_{k=0}^{p-2}q^k \Big] \\
     & \qquad \times (1-q) \Big[(1-\la)\sum_{k=0}^{s-2}q^k\sum_{j=0}^{s-k-2}q^j +\la \sum_{k=0}^{p-2}q^k\sum_{j=0}^{p-k-2}q^j \Big].
  \end{align*}
  Let $\phi_2(q)=\frac{\phi_1(q)}{(1-q)^2}$. Using $\xi''(q)=(1-\la)s\sum_{i=0}^{s-2} q^{s-2}+\la p\sum_{i=0}^{p-2} q^{p-2} $, we can rewrite
  \begin{align*}
    \phi_2(q)& = (1-\la)^2s \Big(\sum_{i=0}^{s-2} q^{s-2} \sum_{k=1}^{s-1}\sum_{j=0}^{k-1}q^{j} -\sum_{i=0}^{s-2}q^i \sum_{k=0}^{s-2}q^k\sum_{j=0}^{s-k-2}q^j\Big)\\
    & \ \ +\la^2 p \Big( \sum_{i=0}^{p-2} q^{p-2} \sum_{k=1}^{p-1}\sum_{j=0}^{k-1}q^{j} -\sum_{i=0}^{p-2}q^i \sum_{k=0}^{p-2}q^k\sum_{j=0}^{p-k-2}q^j \Big)\\
    & \ \ + (1-\la)\la s \Big(\sum_{i=0}^{s-2} q^{s-2} \sum_{k=1}^{p-1}\sum_{j=0}^{k-1}q^{j} -\sum_{i=0}^{s-2}q^i \sum_{k=0}^{p-2}q^k\sum_{j=0}^{p-k-2}q^j\Big)\\
    & \ \ +\la (1-\la) p \Big( \sum_{i=0}^{p-2} q^{p-2} \sum_{k=1}^{s-1}\sum_{j=0}^{k-1}q^{j} -\sum_{i=0}^{p-2}q^i \sum_{k=0}^{s-2}q^k\sum_{j=0}^{s-k-2}q^j \Big)\\
    &=: I+II+III+IV.
  \end{align*}
  Let us start with analyzing $III$. For simplicity, we denote the term in the parenthesis by $III'=\frac{III}{(1-\la) \la s}$. By change of variable $k=p-1-\ell$ in the second summand, we have
  \begin{align*}
    III'&=\sum_{i=0}^{s-2} (q^{s-2} -q^i)\sum_{k=1}^{p-1}\sum_{j=0}^{k-1}q^{j} + \sum_{i=0}^{s-2}q^i \sum_{k=1}^{p-1}\sum_{j=0}^{k-1}(q^{j} -q^{j+p-1-k})\\
    &=(q-1)\sum_{i=0}^{s-3} \sum_{k=0}^{s-3-i} q^{i+k}\sum_{k=1}^{p-1}\sum_{j=0}^{k-1}q^{j} + (1-q)\sum_{i=0}^{s-2}q^i \sum_{k=1}^{p-2}\sum_{j=0}^{k-1}q^j\sum_{i=0}^{p-2-k} q^{i}.
  \end{align*}
  Now let $III''=III'/(q-1)$ and we will find a closed formula for $III''$. By expanding all the sums and recombining, we find
  \begin{align*}
    III'' &= \sum_{k=1}^{s-2} k q^{k-1} \sum_{j=1}^{p-1} (p-j)q^{j-1} - \sum_{i=1}^{s-1}q^{i-1} \sum_{j=1}^{p-2} j(p-1-j)q^{j-1}\\
    &=\sum_{n=p-2}^{p+s-5} q^n\sum_{j=1}^{p+s-4-n} (n-p+2+j)j - \sum_{n=p-2}^{p+s-5} q^n\sum_{j=1}^{p+s-4-n} (p-1-j)j\\
    &\quad +\sum_{n=s-2}^{p-3} q^n\sum_{j=1}^{s-2} (p-2-n+j)j - \sum_{n=s-2}^{p-3} q^n\sum_{j=1}^{s-1} (n+2-j)(p-n-3+j)\\
    &\quad +\sum_{n=0}^{s-3} q^n \sum_{j=1}^{n+1} (p-2-n+j)j - \sum_{n=0}^{s-3} q^n \sum_{j=1}^{n+1} (p-1-j)j
  \end{align*}
  Using the elementary summation formulae for $\sum_1^m j$ and $\sum_1^m j^2$, we have
  \begin{align*}
    III'' &= \sum_{n=p-2}^{p+s-5} q^n III''_1  +\sum_{n=s-2}^{p-3}q^n III''_2+\sum_{n=0}^{s-3}q^n \frac16(n+1)(n+2)(n+3),
  \end{align*}
  where
  \begin{align*}
  III''_1&=\frac16(p+s-n-3)(p+s-n-4)(-2p+4s-n-5)\\
    III''_2&= \sum_{j=1}^{s-2}[2j^2+(2p-3n-7)j -(n+2)(p-n-3)] -(n+3-s)(p+s-n-4)\\
    &=(p-\frac32n-\frac72)(s-1)(s-2)+(s-2)(s-1)(\frac23s-1)\\
    &\quad -(n+2)(p-3-n)(s-2) -(n+3-s)(p+s-4-n)\\
    &= (s-1)n^2-[ps- p+\frac32 s^2-\frac{15}2s+6] n +(p-\frac72)(s-1)(s-2) \\ &\quad +(s-2)(s-1)(\frac23ps-1)-2(s-2)(p-3)+(p+s-4)(s-3).
  \end{align*}
  Note that the coefficient of $n$ is negative, which implies that $III''_2$ is an increasing function of $n$ for $n\in \mathbb{N}$. We see also that the coefficients of $q^n$ for $n\le s-3$ are all positive.

  Let us now turn to the quantity $IV$ and define $IV'=\frac{IV}{(1-\la)\la p}$. By the same argument as for $III'$, we find a factor $q-1$ in $IV'$ and define $IV''=IV'/(q-1)$. Then we have
  \begin{align*}
    IV''&=\sum_{k=1}^{p-2} k q^{k-1} \sum_{j=1}^{s-1} (s-j)q^{j-1} - \sum_{i=1}^{p-1}q^{i-1} \sum_{j=1}^{s-2} j(s-1-j)q^{j-1}\\
    &=\sum_{n=p-2}^{p+s-5} q^n\sum_{j=1}^{p+s-4-n} (n-s+j+2)j -\sum_{n=p-2}^{p+s-5}q^n \sum_{j=1}^{p+s-4-n} (s-1-j)j \\
    &\quad +\sum_{n=s-2}^{p-3} q^n\sum_{j=1}^{s-1} (n+2-s+j)j -\sum_{n=s-2}^{p-3} q^n\sum_{j=1}^{s-2} (s-1-j) j\\
    &\quad +\sum_{n=0}^{s-3} q^n\sum_{j=1}^{n+1} (s-2-n+j) j -\sum_{n=0}^{s-3} q^n \sum_{j=1}^{n+1} (s-1-j) j\\
    &=\sum_{n=p-2}^{p+s-5} q^n IV''_1 + \sum_{n=s-2}^{p-3} q^n IV''_2+\sum_{n=0}^{s-3}q^n \frac16(n+1)(n+2)(n+3)
  \end{align*}
  where
  \begin{align*}
    IV''_1&=\frac16(p+s-n-3)(p+s-n-4)(4p-2s-n-5)\\
    IV''_2&=\frac16s(s-1)(3n-2s+7).
  \end{align*}
  Similarly, we define $I''=\frac{I}{(1-\la)^2s(q-1)}$ and $II''=\frac{II}{\la^2p(q-1)}$. Then we find
  \begin{align*}
    I''&=\sum_{n=s-2}^{2s-5}q^n\sum_{s=1}^{2s-4-n} (n-s+2+j)j +\sum_{n=0}^{s-3} q^n \sum_{j=1}^{n+1} (s+j-n-2) j\\
    &\quad -\sum_{n=s-2}^{2s-5}q^n\sum_{s=1}^{2s-4-n} (s-1-j)j -\sum_{n=0}^{s-3} q^n \sum_{j=1}^{n+1} (s-1-j)j\\
    &=\sum_{n=s-2}^{2s-5}q^n\sum_{j=1}^{2s-4-n}(n-2s+2j+3)j +\sum_{n=0}^{s-3} q^n\sum_{j=1}^{n+1}(2j-n-1)j\\
    &=\sum_{n=s-2}^{2s-5}q^n\frac16(2s-n-3)(2s-n-4)(2s-n-5) +\sum_{n=0}^{s-3}q^n\frac16(n+1)(n+2)(n+3)\\
    &\ge 0
  \end{align*}
 and
\begin{align*}
  II''&=\sum_{n=p-2}^{2p-5}q^n\sum_{j=1}^{2p-4-n}(n-2p+2j+3)j +\sum_{n=0}^{p-3} q^n\sum_{j=1}^{n+1}(2j-n-1)j\\
  &=\sum_{n=p-2}^{2p-5}q^n \frac16(2p-n-3)(2p-n-4)(2p-n-5) +\sum_{n=0}^{p-3}q^n\frac16(n+1)(n+2)(n+3)\\
  &\ge0.
\end{align*}
Both are positive because all their coefficients are positive. We will drop the positive term $I''$ and determine the sign of $\frac1{q-1}(II +III +IV )$. Note that for $n\le s-3$ or $n\ge p+s-4$, the coefficients of $\frac1{q-1}(II +III +IV )$ are all positive. For $p-2\le n\le p+s-5$, we have
\begin{align*}
  s III_1''+ p IV''_1&=\frac16(p+s-n-3)(p+s-n-4)[3(p-s)^2+(p+s)(p+s-5-n)]>0
\end{align*}
as $p+s-5-n\ge0$ and $p-s>0$. So the coefficients are strictly positive for $p-2\le n\le p+s-5$. It remains to determine the coefficients for $s-2\le n\le p-3$. Let $II''_2=\frac16(n+1)(n+2)(n+3)$ be the coefficient of $q^n$ in $II''$. Note that $II''_2,III''_2$ and $IV''_2$ are all increasing as $n$ increases. It suffices to consider the coefficient of $q^{s-2}$ in $\frac1{q-1}(II +III +IV )$, which is
\begin{align*}
  [\la^2p II''_2+(1-\la)\la s III''_2+(1-\la)\la p IV''_2 ]|_{n=s-2}\\
  =s\la\Big[\frac16s^3+\frac16ps^2-ps+\frac56(s+p)-1+\la (ps-p-\frac16s^3 -\frac56s +1)\Big].
\end{align*}
This quantity is no less than zero exactly as \eqref{conla} holds. Thus $\frac{dz_2}{dq}$ is strictly negative because we have strict positivity for the coefficients when $ p-2 \le n\le p+s-5$. This ends the proof of Proposition~\ref{pconla}.
\end{proof}


\section{Examples of 2-RSB models and the proof of Theorem \ref{mainthm}}\label{sec:examples}
We will use Theorem \ref{thm1} and Proposition \ref{pconla} to determine models that are in the 2-RSB phase. The method is as follows: we first use the intermediate value property for the continuous functions $f_1$ and $f_2$ defined in \eqref{e1} and \eqref{e2}, and condition \eqref{conla} to estimate $q,z_2$ in a small interval, and then check the conditions in Theorem \ref{thm1}. The zero set of $f_1$ is more involved, but to guarantee a solution for \eqref{e1} and \eqref{e2} we only need a mild property of this zero set, which we now explain.

Let $[q^-,q^+]\subset(0,1)$ and assume that $f_1(q^-,\cdot)$ (resp.~$f_1(q^+,\cdot)$ has a zero $z_2^1$ (resp.~$z_2^2$) in a closed interval $[\hat z^-(q^-),\hat z^+(q^-)]$ (resp.~$[\hat z^-(q^+),\hat z^+(q^+)])$ contained in $(0,+\8)$. Recall  $z_2^s$ and $z_2^b$ as given in \eqref{e:z2s} and \eqref{e:z2b} and that $z_{2}^{b}(\cdot)$ is continuous on $(0,1)$. If we know
\begin{align}\label{z2scheck}
z_2^s(q^-) > \hat z^+(q^-) \ \ \text{and} \ \ z_2^s(q^+)> \hat z^+(q^+),
\end{align}
then $z_2^1$ and $z_2^2$ are both the smaller zeros, i.e. we have $z_2^1<z_2^b(q^-)$ and $z_2^2<z_2^b(q^+)$. For $q_0\in[q^-,q^+]$, let $\zeta(q_0)$ denote the smaller zero of $f_1(q_0,\cdot)$ as in Proposition \ref{p0f1f2}. Then $f_1(q_0,\zeta(q_0))=0$, $\partial_{z_2}f_1(q_0, \zeta(q_0))<0$. By the implicit function theorem, there exists a unique continuous function $z_2=\tilde\zeta(q)$ defined in a neighborhood $U$ of $q_0$ such that $\zeta(q_0)=\tilde\zeta(q_0)$ and $f_1(q,\tilde\zeta(q))=0$ for $q\in U$. We claim that the function $\tilde \zeta$ has to coincide with $\zeta$ on $U$.
Indeed, suppose that
\[
q^{*}:=\inf \{q \geq q_{0} : q\in U, \tilde \zeta (q)= z_{2}^{b}(q)\}<\8.
\]
By continuity of the function $z_{2}^{b}(\cdot)$, $z_{2}^{b}(q^{*})=0$. Since $\zeta(q^{*})< z^{b}_{2}(q^{*})$, another application of the implicit function theorem, now at $(q^{*}, \zeta(q^{*}))$, shows the existence of $\epsilon >0$ so that $\zeta(q^{*}-\epsilon)< \tilde \zeta(q^{*}-\epsilon)$. Thus $\tilde \zeta(q^{*}-\epsilon)=z_{2}^{b}(q^{*}-\epsilon),$ proving that $q^{*}$ does not exist. This shows that
\[
\zeta(q)=\tilde \zeta(q)\  \text{for} \ q\ge q_0, q\in U.
\]
Similarly, we can show that $\zeta(q)=\tilde \zeta(q)$ {for}  $q\le q_0, q\in U$.
Since the argument above is valid at every point $q_{0}$ of the interval $[q^{-},q^{+}]$, compactness implies that there is a continuous curve given by the graph of $z_2=\zeta(q)$ which connects the points $(q^-,z_2^1)$ and $(q^+,z_2^2)$ if condition \eqref{z2scheck} is verified.

Suppose we know a solution of \eqref{e1} and \eqref{e2} satisfies $q\in[q^-,q^+], z_2\in [z_2^-,z_2^+]$ for some $0~<~q^{-}<q^{+}<1$ and $z_{2}^{-}, z_{2}^{+}>0$. Then from \eqref{1z1z2} we know
\begin{align}\label{z1est}
  \frac{q^-[\xi'(1)-\xi'(q^+)]}{\xi'(q^+)(1-q^-)}-z_2^+-1>0
\end{align}
 ensures $z_1>0$, and
\begin{align}\label{z12est}
  \frac{q^+[\xi'(1)-\xi'(q^-)]}{\xi'(q^-)(1-q^+)}-z_2^- -1 < \frac{q^- z_2^-}{1-q^-}
\end{align}
 ensures $z_2>\frac{z_1(1-q)}{q}$. Since \eqref{1z1z2} is used to compute $z_1$, both equations of \eqref{g2u0} will be fulfilled provided
\begin{align}\label{g2qest}
  \xi''(q^+)(1+z_2^+)(1-q^-)+\xi'(q^+)-\xi'(1)<0.
\end{align}
Substituting $z_{1}$ by \eqref{1z1z2}, we know
\begin{align*}
  h_1(1)=(1+z_2)[\xi'(1)q-\xi'(q)+\xi'(1)(1-q)] -\frac{\xi'(1)q[\xi'(1)-\xi'(q)]}{\xi'(q)}.
\end{align*}
Clearly, as $q$ increases, the first term is decreasing while the second is increasing. So in order to check $h_1(1)>0$ it suffices to check
\begin{align}\label{h11est}
  (1+z_2^-)[\xi'(1)q^+-\xi'(q^+)+\xi'(1)(1-q^+)] -\frac{\xi'(1)q^-[\xi'(1)-\xi'(q^-)]}{\xi'(q^-)}>0.
\end{align}
Since $h_2(0)= \xi'(1)q-\xi'(q)(1+z_2)$, if we assume the condition
\begin{align}\label{h20est}
  \xi'(1)q^+-(1+z_2^-)\xi'(q^-)<0,
\end{align}
it follows that $h_2(0)<0$.

Before going to the examples, we define
\begin{align}\label{aba}
  G=\log\frac{\xi''(1)}{\xi'(1)}-\frac{[\xi''(1)-\xi'(1)][\xi''(1)-\xi'(1) +\xi'(1)^2]} {\xi''(1)\xi'(1)^2}
\end{align}
as in \cite[Definition 4.1]{AB}. The model $\xi$ is called pure-like if $G>0$, full mixture if $G<0$, and critical if $G=0$. Based on their study of critical points of the Hamiltonian $H_{N}$, Auffinger and Ben Arous asked whether the 1-RSB phase coincides with pure-like models (Question 4.1 in \cite{AB}). In pure-like models the average number of local minima near the ground state is exponentially larger than the number of saddles of positive index while in full mixture models these appear in the same number at exponential scale. Jagannath and Tobasco \cite{JT16} provided a negative answer to this question by showing that some pure-like $2+p$ models are not 1-RSB. The choice of the $2$ component is special in this counter-example as the spherical SK model does not have positive complexity\cite[Remark 2.3]{ABC}\footnote{The model $\xi(x)=x^{2}$ is trivial. The Hamiltonian is just a quadratic form and the critical points are simply the eigenvectors of a $N \times N$ GOE matrix. Positive complexity means that the number of critical points is of order $e^{cN}$ for some $c>0$.}. The question was still open if the model has no 2-spin component.

\begin{remark}\label{rem111}
Our examples below show that among the $s+p, s\ge3,$ 2-RSB models, there exist both pure-like and full mixture models. Therefore, this provides a strong negative answer to the question of Auffinger and Ben Arous, and shows that in general the level of RSB cannot be classified by the critical points consideration.
\end{remark}

The next examples provide the proof of Theorem \ref{mainthm}.

\begin{example}[A pure-like $2$-RSB model]\rm Consider
  \[
  \xi(x)=\frac57 x^3 + \frac27 x^{16}.
  \]
  Since $G>0$, this model is pure-like.
  By Proposition \ref{pconla}, a calculation shows that $\frac{dz_2}{dq}<0$ provided $\la>\frac7{39}$; and here we have $\la=\frac27>\frac7{39}$. By numerical calculations, we obtain the following:
  \begin{align*}
     f_1(0.743,3.2)>0,\ f_1(0.743,3.22)<0,\ f_1(0.747,3.2)>0,\ f_1(0.747,3.22)<0,\\
     f_2(0.743,3.22)>0,\ f_2(0.743,3.25)<0,\ f_2(0.747,3.17)>0,\ f_2(0.747,3.2)<0,
  \end{align*}
and also that \eqref{z2scheck} holds.
 From the second line, when $q=0.743$, $f_2(0.743,z_2)$ has a zero in $(3.22, 3.25)$; and when $q=0.747$, $f_2(0.747,z_2)$ has a zero in $(3.17, 3.2)$. Since $z_2$ is strictly decreasing as $q$ increases by Proposition \ref{pconla}, the zero set of $f_2(q,z_2)$ satisfies that if $q\in(0.743, 0.747)$, $z_2\in(3.17,3.25)$. To see there is a solution for \eqref{e1} and \eqref{e2}, the first line shows that $f_1(0.743,z_2)$ has a zero $z_2^1$ in $(3.2,3.22)$ and that $f_1(0.747,z_2)$ has a zero $z_2^2$ in $(3.2,3.22)$. Since \eqref{z2scheck} holds, there is a continuous curve connecting the two points $(0.743, z_2^1)$ and $(0.747, z_2^2)$ as explained above. By continuity there is at least one common element in $(q,z_2)\in[0.743,0.747]\times[3.17,3.25]$ in the zero sets of $f_1(q,z_2)$ and $f_2(q,z_2)$. Choosing $q^-=0.743,q^+=0.747,z_2^-=3.17, z_2^+=3.25$, the conditions \eqref{z1est}, \eqref{z12est}, \eqref{g2qest}, \eqref{h11est} and \eqref{h20est} are verified by numerical calculations. By Theorem \ref{thm1}, any solution $(q,z_2)\in[0.743,0.747]\times[3.17,3.25]$ gives a 2-RSB Parisi measure. But the Parisi measure is unique. Therefore, there is in fact exactly one solution in $(q,z_2)\in[0.743,0.747]\times[3.17,3.25]$.

\end{example}
\begin{remark}
The $3+16$ model was predicted by physicists to be 2-RSB for some choice of coefficients and some positive temperature; see \cite[Figure 2]{CL07}. However, it is not clear from the prediction in \cite{CL07} if the zero temperature case had the same behavior. For instance, by \cite{ACZ17} all mixed $p$-spin (Ising) models are FRSB at zero temperature, while this is in general not expected to be true throughout the low temperature regime.
\end{remark}
\begin{example}[A full mixture $2$-RSB model]\rm For our second example we take
\[
\xi(x)=\frac56x^3+\frac16x^{16}.
\]
Since $G<0$, this is a full mixture model. By numerical calculations, we have
\begin{align*}
  f_1(0.824, 1.58)>0,\ f_1(0.824, 1.6)<0,\
f_1(0.828, 1.57)>0,\ f_1(0.828, 1.6)<0,\\
f_2(0.824, 1.6)>0,\ f_2(0.824, 1.64)<0,\ f_2(0.828, 1.54)>0,\ f_2(0.828, 1.57)<0,
\end{align*}
and condition  \eqref{z2scheck} holds.
From here we see that there is a solution to \eqref{e1} and \eqref{e2} in $(q,z_2)\in[0.824,0.828]\times [1.54, 1.64]$. If we take $q^-=0.824,q^+=0.828, z_2^-=1.54, z_2^+=1.64$, then the conditions  \eqref{z1est}, \eqref{z12est}, \eqref{g2qest}, \eqref{h11est} and \eqref{h20est} are verified. By Theorem \ref{thm1}, the model belongs to the 2-RSB phase.
\end{example}

The following examples show that 2-RSB exists even for convex functions $\xi$. Convexity of $\xi$ was used before to simplify many questions in spin glasses.

\begin{example}\rm
$\xi(x)=\frac56 x^4+\frac16 x^{40}$. Since $G<0$, this model is full mixture. By proposition \ref{pconla}, $\frac{dz_2}{dq}<0$ if $\la>7/107$, which is satisfied here. Numerical calculations yield that
\begin{align*}
  f_1(0.89, 3.6)>0,\ f_1(0.89, 3.8)<0, \ f_1(0.9, 3.7)>0, \ f_1(0.9, 3.9)<0,\\
f_2(0.89, 3.9)>0,\ f_2(0.89, 4.1)<0,\ f_2(0.9, 3.5)>0, \ f_2(0.9, 3.7)<0,
\end{align*}
and condition \eqref{z2scheck} holds.
By the same analysis as above, there is a solution to \eqref{e1} and \eqref{e2} in $(q,z_2)\in[0.89,0.9]\times[3.5,4.1]$. We take $q^-=0.89, q^+=0.9, z_2^-=3.5, z_2^+=4.1$. Then  \eqref{z1est}, \eqref{z12est}, \eqref{g2qest}, \eqref{h11est} and \eqref{h20est} are verified. So $\xi$ is a 2-RSB model.
\end{example}

\begin{example}\rm
$\xi(x)=\frac58x^4+\frac38x^{40}$. Since $G>0$, this model is pure-like. We have $\frac{dz_2}{dq}<0$ as $\frac38>\frac7{107}$. By calculation, we find
\begin{align*}
  f_1(0.83, 9.2)>0,\ f_1(0.83, 9.51)<0,\ f_1(0.833, 9.4)>0,\ f_1(0.833, 9.8)<0,\\
f_2(0.83, 9.51)>0,\ f_2(0.83, 9.8)<0,\ f_2(0.833, 9.3)>0,\ f_2(0.833, 9.4)<0,
\end{align*}
and condition \eqref{z2scheck} holds.
For the same reason as before, we take $q^-=0.83, q^+=0.833, z_2^-=9.3, z_2^+=9.8$ and the conditions  \eqref{z1est}, \eqref{z12est}, \eqref{g2qest}, \eqref{h11est} and \eqref{h20est} are verified. So it is a $2$-RSB model.
\end{example}

\begin{remark}
It would be nice to derive exact conditions on the coefficient $\lambda$ of $\xi(x)=(1-\lambda) x^{s} + \la x^{p}$ that guarantee $2$-RSB. However, this seems to be difficult and not even known in the $1$-RSB case.
\end{remark}

\begin{remark}
Let us stress that our argument here is very specific to the 2-RSB phase  and to $s+p$ models in many aspects. The difficulty goes beyond perseverance. For instance, the four sign change phenomenon for $h_1$ and $h_2$ observed in Lemmas \ref{l:h1} and \ref{l:h2} does not happen if we consider a model with three mixed spins. Proposition \ref{pconla} is tricky to observe, but it is essential in order to bypass the properties of the zero set of $f_1$ as defined in \eqref{e1}, which is hard to study analytically. In summary, our argument is a combination of various non-trivial, specific observations for the $s+p$ models and for the 2-RSB phase. In fact, many miraculous coincidences occur so that our argument works. 
\end{remark}

\section{Proof of Theorem \ref{thm:2RSB} and Corollary \ref{cor:1}}\label{proofofthm2}

There is an alternative formulation of the Parisi formula for the maximum energy proved in \cite{AC17}. For $\nu\in\kx$, define
\[
\hat \nu(s)=\int_s^1 \xi''(r)\nu(dr), \ \ s\in[0,1].
\]
Let $\ux=\{(B,\nu)\in \rz\times \kx: \hat\nu(0)<B\}$ and define the Parisi functional $\ux$ by
\[
\px(B,\nu)= \frac12\Big(\int_0^1 \frac{\xi''(s) ds}{B-\hat\nu(s)} +B - \int_0^1 s\xi''(s)\nu(ds)\Big).
\]
Then the Parisi formula \cite[Theorem 10]{AC17} states that
\[
\mathcal Q(\nu_P) = \inf_{(B,\nu)\in \ux} \px(B,\nu),
\]
and the the infimum is achieved by a unique $(B_P,\nu_P)\in\ux$. In this case
\begin{align}\label{bnu}
B_P=\hat\nu_P(0)+\frac1{\nu_P([0,1])}.
\end{align}
Let $\ga$ be a 2-RSB Parisi measure given as in \eqref{2rsbga}. Using \eqref{bnu} and dropping the subscript, we have
\[
B = A_1\xi'(q)+A_2[\xi'(1)-\xi'(q)]+\Delta \xi''(1)+\frac1{A_1q +A_2(1-q)+\Delta}
\]
and
\[
B-\hat\nu(s) = \begin{cases}
A_1 \xi'(s) +\frac1{A_1q +A_2(1-q)+\Delta}, & s\le q,\\
A_1 \xi'(q)+ A_2 (\xi'(s)-\xi'(q)) + \frac1{A_1q +A_2(1-q)+\Delta}, & q\le s \le 1.
\end{cases}
\]

For $a\in [0,q]$ and $b\in [q,1]$, let us define
\begin{align}
\psi_1(a)&= \frac{(1+z_1+z_2)q \xi'(a)}{z_1} - a\xi'(a) +\xi(a)\notag \\
&\quad  -\frac{q\xi'(q)(1+z_2)(1+z_1+z_2)}{z_1^2} \log\Big(1+\frac{z_1\xi'(a)}{(1+z_2)\xi'(q)}\Big), \label{eh1a}\\
\psi_2(b)& = \xi'(b)(q-b)+\xi(b)-\xi(q)+\frac{[\xi'(b)-\xi'(q)](1-q)}{z_2 } +[\xi'(b)-\xi'(q)](1-q) \notag\\
& \quad -\frac{(1-q)(1+z_2)[\xi'(1)-\xi'(q)]}{z_2^2}\log \Big(1+\frac{[\xi'(b)-\xi'(q)]z_2}{\xi'(1)-\xi'(q)}\Big).\label{eh2b}
\end{align}

%

\begin{proof}[Proof of Theorem \ref{thm:2RSB}]
Let $\xi$ be a 2-RSB convex model with Parisi measure given by $\gamma$ as above. Suppose $\psi_1(a)<0$ for all $a\in(0,q)$ and $\psi_2(b)<0$ for all $b\in(q,1)$. We will show that for any $\eps>0$ there exist $\eta,K>0$ such that for all $N\ge 1$,
\begin{align}\label{eq:wwwtp}
\mathbb{P}_N(\eta, [-1+\eps, -q-\eps]\cup [-q+\eps, -\eps]\cup [\eps, q-\eps]\cup [q+\eps,1-\eps]) \le K e^{-N/K}.
\end{align}

The idea of the proof is to use the Guerra-Talagrand inequality for the coupled minima energy (\cite[Theorem 11]{AC17}). In our context, it says that if we define for a Borel subset $A\subset [-1,1]$,
	\begin{align*}
	M_N(A)&:=\frac{1}{N}\e \min_{R_{1,2}\in A}\bigl(H_N(\sigma^1)+H_N(\sigma^2)\bigr),
	\end{align*}
and if
\[
\px_a(B,\nu_m)=\int_0^1 \frac{\xi''(s)ds}{B-\hat\nu_m(s)} +B - \int_0^1 s\xi''(s) \nu_m(ds),
\]	
where $0\le a\le q$,  $\frac12 \le m\le 2$,
\[
\nu_m(ds)=  (A_1 m\indi_{[0,a)}(s)+A_1 \indi_{[a,q)}(s)+A_2\indi_{[q,1)}(s)  )ds+\Delta \de_{\{1\}}(ds),
\]
with $\hat\nu_m(s) = \int_s^1 \xi''(r)\nu_m(dr)$.
Then for $u\in[-q,q]$ and $a=|u|$,
\begin{align}\label{lem5:eq1}
		\lim_{\varepsilon\downarrow 0}\liminf_{N\rightarrow\infty} M_{N} \bigl((u-\varepsilon,u+\varepsilon)\bigr)&\geq -\mathcal{P}_{a}(B,\nu_{1}).
		\end{align}
Observe that $\px_a(B,\nu_1)=2\px(B,\nu_P)=2\qx(\nu_P)$.

We denote $|\nu|=\nu[0,1] =A_1q +A_2(1-q)+\Delta$.	 Then we have
\[
B-\hat\nu_m(s)=
\begin{cases}
  A_1\xi'(a)+A_1 [\xi'(s)-\xi'(a)]m +\frac1{|\nu|},& 0\le s\le a,\\
  A_1\xi'(s)+\frac1{|\nu|},& a\le s\le q,\\
  A_1\xi'(q)+A_2[\xi'(s)-\xi'(q)]+\frac1{|\nu|},& q\le s\le 1.
\end{cases}
\]
By dominated convergence theorem, we may differentiate inside the integral and get
\begin{align*}
  \partial_m \px_a(B,\nu_m) = -\int_0^a \frac{A_1[\xi'(s)-\xi'(a)]d\xi'(s)}{(A_1\xi'(a) +A_1[\xi'(s)-\xi'(a)]m +\frac1{|\nu|})^2}-A_1\int_0^a s\xi''(s) ds.
\end{align*}
Plugging in $m=1$ and evaluating the integral, we find
\begin{align*}
  \frac{\partial_m \px_a(B,\nu_m)|_{m=1}}{A_1} &= -\int_0^a \frac{[\xi'(s)-\xi'(a)]d\xi'(s)}{(A_1\xi'(s)+\frac1{|\nu|})^2} - a\xi'(a) +\xi(a)\\
  &=-\frac1{A_1^2}\log(1+A_1\xi'(a)|\nu|)+ \frac{\xi'(a)|\nu|}{A_1} - a\xi'(a) +\xi(a).
\end{align*}
Now using the change of variables \eqref{e10} and comparing with \eqref{eh1a}, we have
\begin{align*}
  \frac{\partial_m \px_a(B,\nu_m)|_{m=1}}{A_1} &=\frac{(1+z_1+z_2)q \xi'(a)}{z_1} - a\xi'(a) +\xi(a) \notag \\
 &\quad -\frac{q\xi'(q)(1+z_2)(1+z_1+z_2)}{z_1^2} \log\Big(1+\frac{z_1\xi'(a)}{(1+z_2)\xi'(q)}\Big) \notag \\
 &=\psi_1(a) <0, \quad \forall a\in(0,q).
\end{align*}

Similarly, for $q\le b \le 1$ and $\frac12\le m\le 2$, we let
\[
\nu_m(ds) = (A_1 \indi_{[0,q)}(s)+A_2 m \indi_{[q,b)}(s) +A_2\indi_{[b,1)}(s)) ds +\Delta \de_{\{1\}}(ds).
\]
Then we have
\[
B-\hat \nu_m(s) = \begin{cases}
A_1\xi'(s) +A_2[\xi'(q)-\xi'(b)]m + A_2[\xi'(b)-\xi'(q)] +\frac1{|\nu|}, & 0\le s \le q,\\
A_1\xi'(q)+A_2[\xi'(b)-\xi'(q)]+A_2[\xi'(s)-\xi'(b)]m +\frac1{|\nu|}, & q\le s\le b,\\
A_1\xi'(q) + A_2[\xi'(s)-\xi'(q)]+\frac1{|\nu|}, & b\le s\le 1.
\end{cases}
\]
Let
\[
\px_b(B,\nu_m) = \int_0^1 \frac{\xi''(s) ds}{B-\hat\nu_m(s)} +B-\int_0^1 s\xi''(s) \nu_m(ds).
\]
It follows that $\px_b(B,\nu_1) = 2\px(B,\nu_P) = 2\qx(\nu_P)$. Differentiating inside the integral, we find
\[
\partial_m\px_b(B,\nu_m) = -A_2\int_0^q \frac{[\xi'(q)-\xi'(b)] d\xi'(s)}{[B-\hat\nu_m(s)]^2}-A_2\int_q^b \frac{[\xi'(s)-\xi'(b)]d\xi'(s)}{[B-\hat\nu_m(s)]^2} - A_2\int_q^b s\xi''(s) ds.
\]
Plugging in $m=1$, we have
\begin{align*}
\frac{\partial_m\px_b(B,\nu_m) |_{m=1}}{A_2} &= -\int_0^q\frac{[\xi'(q)-\xi'(b)]d\xi'(s)}{(A_1\xi'(s)+\frac1{|\nu|})^2} -\int_q^b \frac{[\xi'(s)-\xi'(b)]d\xi'(s)}{(A_1\xi'(q)+A_2[\xi'(s)-\xi'(q)]+\frac1{|\nu|})^2}\\
&\quad - b\xi'(b)+q\xi'(q)+\xi(b)-\xi(q)\\
&= I + II - b\xi'(b)+q\xi'(q)+\xi(b)-\xi(q).
\end{align*}
Evaluating the integrals yields
\begin{align*}
I = \frac1{A_1}[\xi'(q)-\xi'(b)]\frac1{A_1\xi'(s)+\frac1{|\nu|}}\Big|_{s=0}^q = \frac{[\xi'(b)-\xi'(q)]|\nu| \xi'(q)}{ A_1 \xi'(q)+\frac1{|\nu|}},
\end{align*}
and
\begin{align*}
II &= -\frac1{A_2} \int_q^b \frac{d\xi'(s)}{A_1\xi'(q)+A_2[\xi'(s)-\xi'(q)]+\frac1{|\nu|}} \\
&\quad + \frac{A_1\xi'(q)  +A_2[\xi'(b)-\xi'(q)]+\frac1{|\nu|}}{A_2}\int_q^b \frac{d\xi'(s)}{(A_1\xi'(q)+A_2[\xi'(s)-\xi'(q)]+\frac1{|\nu|})^2}\\
&=-\frac1{A_2^2} \log\Big(1+\frac{A_2[\xi'(b)-\xi'(q)]}{A_1\xi'(q)+\frac1{|\nu|}}\Big)+\frac{\xi'(b)-\xi'(q)}{A_2(A_1\xi'(q)+\frac1{|\nu|})}.
\end{align*}
Using the change of variables \eqref{e10} and comparing with \eqref{eh2b}, we can rewrite
\begin{align*}
\frac{\partial_m\px_b(B,\nu_m) |_{m=1}}{A_2}& = \xi'(b)(q-b)+\xi(b)-\xi(q)+\frac{[\xi'(b)-\xi'(q)](1-q)}{z_2 } +[\xi'(b)-\xi'(q)](1-q)\\
& \ -\frac{(1-q)(1+z_2)[\xi'(1)-\xi'(q)]}{z_2^2}\log \Big(1+\frac{[\xi'(b)-\xi'(q)]z_2}{\xi'(1)-\xi'(q)}\Big)\\
&= \psi_2(b)<0, \quad \forall b\in(q,1).
\end{align*}
The rest of the proof follows the same argument as for \cite[Theorem 6]{AC17}. We reproduce it here for completeness and clarity for the general audience.

Since $(a,m)\mapsto\partial_m\mathcal{P}_a(B,\nu_m)$ is continuous on $[\eps,q-\eps]\times [\frac12,2]$ and on $[q+\eps,1-\eps]\times [\frac12,2]$, from the mean value theorem, there exist $m$ around $1$ and $\eta>0$ such that for any $u$ with $|u|\in[\varepsilon, q-\varepsilon] \cup [q+\varepsilon,1-\varepsilon]$,
		\begin{align*}
		\mathcal{P}_{|u|} (B,\nu_m)&\leq \mathcal{P}_{|u|} (B,\nu_1)-4\eta\\
		&=2\qx(\nu_P) -4\eta.
		\end{align*}
		Therefore, from \eqref{lem5:eq1} and an analogous inequality with lower bound $-\px_b(B,\nu_1)$ for $|u|=b\in [q, 1]$, we find for any $u$ satisfying $|u|\in[\varepsilon, q-\varepsilon] \cup [q+\varepsilon,1-\varepsilon]$,
		\begin{align*}
		\lim_{\varepsilon'\downarrow 0}\liminf_{N\rightarrow\infty}M_{N} \bigl((u-\varepsilon',u+\varepsilon')\bigr)>-2\qx(\nu_P) +4\eta.
		\end{align*}
We now proceed with a covering argument. Since $\mathcal S:=[-1+\varepsilon,-q-\varepsilon] \cup [-q+\varepsilon, -\varepsilon] \cup [\varepsilon, q-\varepsilon] \cup [q +\varepsilon, 1-\varepsilon]$ is compact, we can cover it with a finite collection of intervals of the type $(a_{i}-\epsilon_{i}, a_{i} +\epsilon_{i})$, $i =1, \ldots, n$ for some $a_{i} \in \mathcal S$. Therefore from the above display, there exists $N_0\ge 1$ such that
\[
M_{N} \bigl(\mathcal S \bigr)>-2\qx(\nu_P) +3\eta,
\]
for $N\ge N_0$.
Next, a standard argument from concentration of measure for Gaussian extrema processes implies that there exists $K>0$ such that with probability at least $1-Ke^{-N/K}$,

\begin{align}\label{thm2:proof:eq-2}
\frac{1}{N}\min_{R_{1,2}\in \mathcal S}\bigl(H_N(\sigma^1)+H_N(\sigma^2)\bigr)\geq -2\qx(\nu_P) + 2\eta.
\end{align}
If there exist $\sigma^1,\sigma^2$ such that $R_{1,2}\in \mathcal S$, $H_N(\sigma^1)\leq N(-\qx(\nu_P)+\eta/2)$, and $H_N(\sigma^2)\leq N(-\qx(\nu_P)+\eta/2)$, then
		
\begin{align*}
\frac{H_N(\sigma^1)+H_N(\sigma^2)}{N}\leq  -2\qx(\nu_P)+ \eta.
\end{align*}
From \eqref{thm2:proof:eq-2}, this means that $\p_N(\eta/2,\mathcal S)\leq Ke^{-N/K}$ for all $N\geq N_0$ and this clearly implies \eqref{eq:wwwtp}, ending the proof of the theorem.
\end{proof}
\begin{remark}
Note that Examples 3 and 4 considered in the previous section satisfy the assumptions (and conclusion) of Theorem 2.
\end{remark}
\begin{remark}\label{rem3}
Note that $\psi_1(0)=0$ and $\psi_1(q)=0$ by \eqref{e10} and \eqref{e11}. Moreover, we know
\begin{align*}
  \psi_1'(a) &=\xi''(a)\Big[\frac{(1+z_1+z_2)q}{z_1}-a-\frac{q\xi'(q)(1+z_2) (1+z_1+z_2)}{z_1[(1+z_2)\xi'(q)+z_1\xi'(a)]} \Big]\\
  & = \frac{\xi''(a)[\xi'(a)[(1+z_1+z_2)q-z_1a] -\xi'(q)(1+z_2)a]}{(1+z_2)\xi'(q)+z_1\xi'(a)}.
\end{align*}
Comparing $\psi_1'(a)$ with the function $h_1(u)$ as in \eqref{eh1u}, we see that $\psi_1'(a)$ and $h_1(a)$ have the same sign for $0\le a\le q$. Similarly, note that $\psi_2(q)=0$ and $\psi_2(1)=0$ by \eqref{e10} and \eqref{e12}. Moreover, a calculation gives
\begin{align*}
\psi_2'(b) &= \xi''(b)q-b\xi''(b)+\frac{\xi''(b)(1-q)}{z_2}+\xi''(b)(1-q)-\frac{\xi''(b)(1-q)(1+z_2)[\xi'(1)-\xi'(q)]}{z_2[\xi'(1)-\xi'(q)+(\xi'(b)-\xi'(q))z_2]}\\
&=\frac{ \xi''(b)[(q-b)[\xi'(1)-\xi'(q)]+[\xi'(b)-\xi'(q)](1-q+(1-b)z_2 ) ] }{\xi'(1)-\xi'(q)+[\xi'(b)-\xi'(q)]z_2}.
\end{align*}
Comparing with $h_2(u)$ as in \eqref{eh2u}, we see that $\psi_2'(b)$ and $h_2(b)$ have the same sign for $q\le b \le 1$.

This means that $\psi_1(a)<0, a\in(0,q)$ and $\psi_2(b)<0,b\in(q,1)$ will hold once the conditions \eqref{ch1u} and \eqref{ch2u} are enforced.
\end{remark}

\begin{proof}[Proof of Corollary \ref{cor:1}]
For any $\sigma \in \mathcal L(\eta)$ let $\mathcal C_{x}$ denote the connected component of $x$ in $\mathcal L(\eta)$, that is,
\[ \mathcal C_{x}=\bigg \{ y \in \mathcal L(\eta): \exists \gamma:[0,1] \to S_{N} \text{ continuous  with } \gamma(0)=x, \gamma(1)=y, \gamma(t) \in \mathcal L(\eta), \; \forall t \in [0,1] \bigg\}.
\]

Let $x, y \in \mathcal L(\eta)$ so that $|R_{12}(x,y)| \leq \epsilon$. By choosing $\epsilon$ small enough, any path between $x$ and $y$ must contain two points that have overlap in $(S_{\varepsilon}\cup -S_{\varepsilon})^{c}$. From Theorem \ref{thm:2RSB}  this implies that with high probability
\[
\mathcal C_{x} \cap \mathcal C_{y} = \emptyset,
\]
where $\eta$ is taken from Theorem \ref{thm:2RSB}. Since $H_{N}$ is a smooth function almost surely, each of this disjoint connected components must contain at least a local minimum. Now choose $\eta'$ and $\epsilon'$ so that $0<\eta'+\epsilon' < \eta$. Note that $\mathcal L(\eta') \subseteq \mathcal L(\eta)$. The Corollary now follows directly by equation \eqref{es}, since the random set $O_{N}$ guarantees the existence of exponentially many of such points in $\mathcal L(\eta')$.
\end{proof}

We end the paper with one extra remark about the computations above. This may be of independent interest.
\begin{remark}
The functions $h_1$ and $h_2$ are essential for the 2-RSB phase. Indeed, we have seen in Remark \ref{rem3} that the derivatives of $\psi_1$ and $g_1$ (resp. $\psi_2$ and $g_2$) have the same sign as $h_1$ (resp. $h_2$) in their domain. In fact, $h_1$ and $h_2$ also appear in another situation.

There is an alternative formulation of the minimizer condition  for the Parisi functional in \cite[Proposition 3]{AC17}. Let us recall here. Let
\[
\bar f(s) = \int_s^1 f(r)\xi''(r) dr \quad \text{where} \quad f(r) =\int_0^r \frac{\xi''(t) dt}{(B-\hat \nu(s))^2}-r.
\]
Let $\ga$ be a 2-RSB Parisi measure given as in \eqref{2rsbga}. By direct computation, we have for $0\le s\le q$,
\begin{align*}
\bar f(s) &=-\xi'(1) + s\xi'(s)+1-\xi(s) +\frac{[\xi'(q)-\xi'(s)][A_1q + A_2 (1-q)+\Delta]}{A_1} \\
&+ \frac{\xi'(1)-\xi'(q)}{A_2(A_1\xi'(q)+\frac1{A_1 q +A_2(1-q) +\Delta})} -\frac1{A_1^2}\log\frac{A_1\xi'(q)(A_1q+A_2(1-q)+\Delta)+1}{A_1\xi'(s)(A_1q +A_2(1-q)+\Delta)+1}\\
&-\frac1{A_2^2}\log\Big(1+ \frac{A_2[\xi'(1)-\xi'(q)]}{A_1\xi'(q)+\frac1{A_1q+A_2(1-q)+\Delta}}\Big)\\
&+\frac{\xi'(1)-\xi'(q)}{A_1}\Big( A_1q+A_2(1-q)
+\Delta-\frac{A_1q+A_2(1-q)+\Delta}{A_1\xi'(q)(A_1q+A_2(1-q)+\Delta)+1}\Big),
\end{align*}
and for $q\le s\le 1$,
\begin{align*}
\bar f(s) &= -\xi'(1) + s\xi'(s)+1-\xi(s) +\frac{\xi'(1)-\xi'(s)}{A_2}\frac{A_1q+A_2(1-q)+\Delta}{A_1\xi'(q)(A_1q+A_2(1-q)+\Delta)+1}\\
&+\frac{\xi'(1)-\xi'(s)}{A_1}\Big( A_1q+A_2(1-q)
+\Delta-\frac{A_1q+A_2(1-q)+\Delta}{A_1\xi'(q)(A_1q+A_2(1-q)+\Delta)+1}\Big)\\
&-\frac1{A_2^2}\log\frac{A_1\xi'(q)+A_2(\xi'(1)-\xi'(q))+\frac1{A_1q+A_2(1-q)+\Delta}}{A_1\xi'(q)+A_2(\xi'(s)-\xi'(q))+\frac1{A_1q+A_2(1-q)+\Delta}}.
\end{align*}
Using the change of variables \eqref{e10}, we rewrite for $0\le s\le q$,
\begin{align*}
\bar f(s) &= -\xi'(1) + s\xi'(s)+1-\xi(s) +\frac{[\xi'(q)-\xi'(s)]q(1+z_1+z_2)}{z_1}+\frac{[\xi'(1)-\xi'(q)](1-q)(1+z_2)}{z_2}\notag \\
&\qquad -\frac{q\xi'(q)(1+z_2)(1+z_1+z_2)}{z_1^2} \log \frac{(1+z_1+z_2)\xi'(q)}{(1+z_2)\xi'(q)+z_1\xi'(s)}\notag\\
&\qquad -\frac{(1-q)(1+z_2)[\xi'(1)-\xi'(q)]}{z_2^2}\log(1+z_2)+[\xi'(1)-\xi'(q)]q \notag\\
&= s\xi'(s)+\xi(q)-\xi(s)-q\xi'(q)  +\frac{q[\xi'(q)-\xi'(s)](1+z_1+z_2)}{z_1} \notag\\
&\qquad -\frac{q\xi'(q)(1+z_2)(1+z_1+z_2)}{z_1^2} \log \frac{(1+z_1+z_2)\xi'(q)}{(1+z_2)\xi'(q)+z_1\xi'(s)},
\end{align*}
where in the second equation we used \eqref{e12}. Similarly for $q\le s\le 1$, we have
\begin{align*}
  \bar f(s) &=  s\xi'(s)-\xi'(s) +1-\xi(s) +\frac{[\xi'(1)-\xi'(s)](1-q)}{z_2} \notag \\
  &\ -\frac{(1-q)(1+z_2)(\xi'(1)-\xi'(q))}{z_2^2} \log\frac{1+z_2}{1+\frac{z_2(\xi'(s)-\xi'(q))}{\xi'(1)-\xi'(q)}}.
\end{align*}
By \cite{AC17}, $(B,\nu)\in\ux$ is the minimizer of $\px$ if and only if
\begin{align}
f(1)=0, \bar f(q)=0, \bar f(0) = 0,  \label{fff}\\
\bar f(u)\ge0 \text{ for } u\in[0,1].\label{barf0}
\end{align}
In this case, we necessarily have $f(q)=0$. One can check that the conditions \eqref{fff} hold using \eqref{e10}--\eqref{e13}. The expression of $\bar f(s)$, however, looks very different than $g(u)$ as in \eqref{gu1} and \eqref{gu2}. Nevertheless, using \eqref{e10}--\eqref{e12} one can directly check that $\bar f'(s)$ and $-h_1(s)$ have the same sign for $s\in[0,q]$ and that $\bar f'(s)$ and $-h_2(s)$ have the same sign for $s\in[q,1]$.

This means the conditions \eqref{ch1u} and \eqref{ch2u} given in Proposition \ref{prop2} are sufficient for $\bar f(s)\ge 0$ as in \eqref{barf0}, as well as $\psi_1(u)\le 0, \psi_2(u)\le 0, g_1(u)\le 0, g_2(u)\le 0$.

\end{remark}

\bibliographystyle{plain}
\bibliography{spinglass}
\end{document}